\newcommand{\E}{\mathbb{E}}
\DeclareMathOperator{\Cov}{Cov}
\DeclareMathOperator{\Span}{Span}
\DeclareMathOperator{\Var}{Var}
\theoremstyle{plain}%
\newtheorem{theorem}{Theorem}[section]
\newtheorem{lemma}[theorem]{Lemma}
\newtheorem{proposition}[theorem]{Proposition}
\newtheorem{corollary}[theorem]{Corollary}
\theoremstyle{definition}
\newtheorem{definition}[theorem]{Definition}
\newtheorem{example}[theorem]{Example}
\theoremstyle{remark}
\newtheorem{remark}[theorem]{Remark}
\title{Monotonicity properties of exclusion sensitivity}
\author{Malin Palö Forsström}
\thanks{Chalmers University of Technology, Gothenburg, Sweden. E-mail: \texttt{palo@chalmers.se}}
\date{\today}
\definecolor{code_gray}{rgb}{0.5,0.5,0.5}
\definecolor{dark_gray}{rgb}{0.2,0.2,0.2}
\definecolor{gray}{rgb}{0.5,0.5,0.5}
\definecolor{light_gray}{rgb}{0.95,0.95,0.95}
\definecolor{dark_blue}{RGB}{29,29,102}
\definecolor{mid_blue}{RGB}{51,51,178}
\definecolor{light_blue}{RGB}{153,153,217}
\begin{document}

\maketitle

\begin{abstract}
In~\cite{bgs2013}, exclusion sensitivity and exclusion stability for symmetric exclusion processes on graphs were defined as natural analogues of noise sensitivity and noise stability in this setting. As these concepts were defined for any sequence of connected graphs, it is natural to study the monotonicity properties of these definitions, and in particular, of whether some graphs are in some sense more stable or sensitive than others. The main purpose of this paper is to answer one such question which was stated explicitly in~\cite{bgs2013}. In addition, we get results about the eigenvectors and eigenvalues of symmetric exclusion processes on complete graphs.
\end{abstract}

\vspace{4em}
\setcounter{secnumdepth}{1}
\setcounter{tocdepth}{1}
\tableofcontents

\section{Introduction}
The notions \textit{noise sensitivity} and \textit{noise stability} were first introduced in~\cite{schramm2000}, describing how sensitive a sequence of Boolean functions \( f_n \colon \{ 0,1\}^n \to \{ 0,1\} \) was to a particular kind of noise in the argument. The main type of noise considered was re-sampling each entry in the argument of \( f_n \) with a small probability, or equivalently, by letting each such entry run a continuous time Bernoulli process in a short time interval. Since this paper was published, similar definitions have been made in slightly different settings, by changing one or several of the elements in the setup, such as the domain of the functions \( f_n \), the range of the functions \( f_n \) or the process constituting the noise (\cite{schramm2000,st1999,md2005,abgm2014, kd2012}). In~\cite{bgs2013}, the range and domain of the functions \( (f_n)_{n \geq 1} \) was kept from the original setting, but the process was changed into a symmetric exclusion process with respect to some sequence of connected graphs, \( (G_n)_{n \geq 1} \). In this new setting, it is natural to ask to what extent the sensitivity of a sequence of functions depends on the sequence of graphs. This is the main subject of this paper.

We now define what we mean by a symmetric exclusion process. 
Let \( (G_n)_{n \geq 1} \) be a  sequence of finite connected graphs and let \( (\alpha_n)_{n \geq 1} \) be a sequence of strictly positive real numbers. We are interested in the sequence of Markov chains \( (X^{(n)} )_{n \geq 1} \), where \( X^{(n)} \) is a \emph{symmetric exclusion process} on \( \{ 0,1 \}^{V(G_n)} \) with rate \( \alpha_n \). This process can be defined as follows. At time zero, put a black or white marble at each vertex of the graph. Now for each edge \( e \in E(G_n) \), associate an independent  Poisson clock with rate \( \alpha_n \). When this clock rings, interchange the marbles at the endpoints of \( e \). Let \( X_t^{(n)} \) be the configuration of marbles at time~\( t \).

In general, for a graph \( G_n \) we will identify configurations of black and white marbles with elements in \( \{ 0,1 \}^{V(G_n)} \) by letting the numbers be indicators of black marbles. Similarly, for each \( \ell \in \{ 0,1, \ldots, |V(G_n)| \} \), we will identify elements in \( \binom{V(G_n)}{\ell} \) with configurations with exactly \( \ell \) black marbles by representing such a configuration by the set of vertices at which there are black marbles. 

It is easy to check that the uniform distribution \( \pi_n \) on \( \{ 0,1 \}^{V(G_n)} \) will be a stationary distribution for the symmetric exclusion process \( X^{(n)} \) on \( G_n \) with rate \( \alpha_n \). However, as the Markov process \( X^{(n)} \) is not irreducible, this is not the only stationary distribution, and in fact for any \( \ell \in \{ 0, \ldots, |V(G_n)| \}  \), the uniform distribution \( \pi_n^{(\ell)} \) on \( \binom{V(G_n)}{\ell} \) will be a stationary distribution for~\( X^{(n)} \). For \( x,y \in \binom{V(G)}{\ell} \), write \( x \sim y \) to denote that \( y \) can be obtained from \( x \) by interchanging the marbles at the endpoints of some edge in \( E(G) \). Whenever we pick \( X_0^{(n)} \) according to \( \pi_n \), we will say that \( X^{(n)} \) is a symmetric exclusion process with respect to \( (G_n, \alpha_n)_{n \geq 1} \), and whenever we pick \( X_0^{(n)} \) according to \( \pi_n^{(\ell_n)} \) we will say that \( X^{(n)} \) is a symmetric exclusion process with respect to \( (G_n, \alpha_n, \ell_n )_{n \geq 1} \).

We now give the two definitions from~\cite{bgs2013} with which we will be concerned.
\begin{definition}
Let \( (G_n)_{ n \geq 1 } \) be a sequence of finite connected graphs and let \( (\alpha_n)_{n\geq 1 } \) be a sequence of real numbers. For each \( n \geq 1 \), let \( X^{(n)} \) be the symmetric exclusion process on \( \{ 0,1 \}^{V(G_n)} \) with rate \( \alpha_n \) where \( \mathcal{L}(X_0^{(n)} ) =  \pi_n \).
The sequence of functions \( f_n \colon \{ 0,1 \}^{V(G_n)} \to \{0,1 \} \) is said to be \emph{exclusion sensitive} (XS) with respect to \( ( G_n, \alpha_n )_{n \geq 1} \) if 
\[
\lim_{n \to \infty}\Cov (f_n(X_0^{(n)}), f_n(X_1^{(n)})) = 0
\]
\end{definition}

The next definition captures an opposite behavior.
\begin{definition}
Let \( (G_n)_{ n \geq 1 } \) be a sequence of finite connected graphs and let \( (\alpha_n)_{n\geq 1 } \) be a sequence of real numbers. For each \( n \geq 1 \), let \( X^{(n)} \) be the symmetric exclusion process on \( \{ 0,1 \}^{V(G_n)} \) with rate \( \alpha_n \) where \( \mathcal{L}(X_0^{(n)} )= \pi_n \).
The sequence of functions \( f_n \colon \{ 0,1 \}^{V(G_n)} \to \{0,1 \}  \) is said to be \emph{exclusion stable} (XStable) with respect to \( ( G_n, \alpha_n )_{n \geq 1} \) if 
\[
\lim_{\varepsilon \to 0} \limsup_n P((f(X_0^{(n)}) \not =  f(X_\varepsilon^{(n)})) = 0,
\]
or equivalently, if
\[
\lim_{\varepsilon \to 0} \limsup_n \E[(f(X_0^{(n)}) - f(X_\varepsilon^{(n)}))^2] = 0.
\]
\end{definition}

Now let \( K_n \) be the complete graph on \( n \) vertices.  In addition to the two definitions above, a sequence of functions \( (f_n)_{n \geq 1} \), where \( f_n \colon \{ 0,1 \}^{V(G_n)} \to \{0,1 \}  \), is said to be \emph{complete graph exclusion sensitive} (CGXS) is it is exclusion sensitive with respect to \( ( K_n, 1/n)_{n \geq 1} \) and \emph{complete graph exclusion stable} (CGXStable) if it is exclusion stable with respect to \( ( K_n, 1/n)_{n \geq 1} \).

It is relatively easy to see that any sequence of functions \( (f_n)_{n \geq 1} \) for which \( {\lim_{n \to \infty} \Var (f_n(X_0^{(n)})) = 0} \) will be both exclusion stable and exclusion sensitive with respect to \( (G_n, \alpha_n)_{n \geq 1} \) for any sequence \( (\alpha_n)_{n \geq 1 } \) of positive numbers. For this reason, we will only be interested in so called \textit{nondegenerate} sequences of functions \( (f_n)_{n \geq 1} \), meaning that \( \Var(f_n(X_0^{(n)})) \) is uniformly bounded away from zero.

In~\cite{bgs2013}, the authors asked the following two questions. If \( (G_n)_{n \geq 1} \) is a sequence of graphs, \( \alpha_n \) satisfies \( \alpha_n \leq 1/\max_{v \in V(G_n)} \deg v \) and \( f_n \colon \{ 0,1 \}^{V(G_n)} \to \{ 0,1 \} \) is a sequence of functions, is it the case that
\begin{enumerate}
\item \( (f_n)_{n \geq 1} \) is XS with respect to \( (G_n, \alpha_n)_{n \geq 1} \) \( \Rightarrow\) \( (f_n)_{n \geq 1} \) is CGXS?
\item \( (f_n)_{n \geq 1} \) is CGXStable \(\Rightarrow\) \( (f_n)_{n \geq 1} \) is XStable with respect to \( (G_n, \alpha_n)_{ n\geq 1} \)?
\end{enumerate}
The main objective of this paper is to provide a proof of the following result, which provides a positive answer to both questions.
\begin{theorem}
Let \( (f_n)_{ n \geq 1 } \), \( f_n \colon \{ 0,1 \}^n \to \{0,1 \}  \), be a sequence of  functions and let \( (G_n)_{n \geq 1} \) be a sequence of connected graphs. Further let \( \alpha_n \leq  1/ \max_{v \in V(G_n)} \deg v \).  Then
\begin{enumerate}
\item[(i)] if \( (f_n)_{n \geq 1} \) is exclusion sensitive with respect to \( (G_n, \alpha_n )_{n \geq 1} \), then \( (f_n)_{n\geq 1 } \) is exclusion sensitive with respect to \( (K_{|V(G_n)|}, 1/|V(G_n)|)_{n \geq 1} \).
\item[(ii)] if \( (f_n)_{n \geq 1} \) is exclusion stable with respect to \( (K_{|V(G_n)|}, 1/|V(G_n)|)_{n \geq 1} \), then \( (f_n)_{n\geq 1 } \) is exclusion stable with respect to \( (G_n, \alpha_n)_{n \geq 1} \).
\end{enumerate}
\label{theorem:main result}
\end{theorem}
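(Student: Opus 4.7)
The natural starting point is to reformulate both statements as $L^2$ inequalities. By reversibility of the exclusion process,
\[
\Cov\bigl(f_n(X_0^{(n)}), f_n(X_t^{(n)})\bigr) = \bigl\|P_{t/2}^{G_n}(f_n - \E f_n)\bigr\|_{L^2(\pi_n)}^2,
\]
where $P_t^{G_n}$ is the Markov semigroup of the exclusion process on $G_n$, and similarly for $K_n$. With $g_n := f_n - \E f_n$, part (i) becomes the implication $\|P_{1/2}^{G_n} g_n\|^2 \to 0 \Rightarrow \|P_{1/2}^{K_n} g_n\|^2 \to 0$, while (ii) becomes the analogous implication that $\limsup_n\bigl(\|g_n\|^2 - \|P_{\varepsilon/2}^{K_n} g_n\|^2\bigr) \to 0$ as $\varepsilon \to 0$ forces the same on $G_n$. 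The two parts are thus two sides of the same coin.

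To handle $K_n$ I would first work out the spectral decomposition of $P_t^{K_n}$, which is the eigenvector/eigenvalue content flagged in the abstract. Fix the particle count $\ell$. The generator commutes with the natural $S_n$-action on $L^2\bigl(\binom{V(K_n)}{\ell}\bigr)$, so this space decomposes as $V_0 \oplus V_1 \oplus \cdots \oplus V_{\min(\ell, n-\ell)}$, where $V_m$ is the isotypic component for the $S_n$-irreducible representation indexed by the Young diagram $(n-m, m)$. By Schur's lemma each $V_m$ is then an eigenspace, with eigenvalue $\mu_m^{K_n}$ computable explicitly by evaluating the generator on a convenient test vector. Any $f_n$ admits a graph-independent orthogonal decomposition into pieces $f_n^{(\ell,m)} \in V_m$, and on $K_n$ one obtains
\[
\|P_t^{K_n} g_n\|^2 = \sum_{\ell,\,m \geq 1} e^{-2t\mu_m^{K_n}} \|f_n^{(\ell,m)}\|^2.
\]
In particular, non-vanishing of $\|P_{1/2}^{K_n} g_n\|^2$ forces $\|f_n^{(\ell,m)}\|^2$ to be bounded away from $0$ for at least one pair $(\ell, m)$ with $m \geq 1$.

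For a general $G_n$ the semigroup $P_t^{G_n}$ does not respect the $V_m$-decomposition, and small examples (the path on three vertices versus $K_3$ already suffices) show that the pointwise inequality $\|P_t^{K_n} g\|^2 \leq \|P_t^{G_n} g\|^2$ can fail. I would therefore aim for an asymptotic, level-by-level comparison: show that if $\|f_n^{(\ell,m)}\|^2$ stays bounded away from $0$ along a subsequence, then so does $\|P_{1/2}^{G_n} g_n\|^2$. The hypothesis $\alpha_n \leq 1/\max_v \deg v$ is crucial here because it normalizes the rate at which each vertex is involved in swaps to at most $1$, matching the corresponding $(n-1)/n \approx 1$ per-vertex rate on $K_n$; thus the two dynamics operate on comparable time scales.

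The main obstacle is making this level-by-level comparison precise without a natural eigenbasis on $G_n$. A promising tool is the $S_n$-symmetrized semigroup $\bar P_t := (n!)^{-1} \sum_{\sigma \in S_n} P_t^{\sigma G_n}$, where $\sigma G_n$ denotes $G_n$ with vertices relabeled by $\sigma$. By $S_n$-equivariance this operator acts on each $V_m$ by a scalar $c_m(t) = (\dim V_m)^{-1} \trace(P_t^{G_n} \Pi_m)$, where $\Pi_m$ is the projection onto $V_m$. A direct trace computation exploiting the $S_n$-invariance of $\pi_n$ and the symmetry of the swap operators should show that $\trace((-L^{G_n}) \Pi_m)$ is bounded in terms of $\alpha_n |E(G_n)| \leq n/2$ and $\mu_m^{K_n}$, which by Jensen's inequality yields a lower bound $c_m(t) \geq e^{-t \mu_m^{K_n}(1+o(1))}$. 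The delicate step --- and, I expect, the real technical heart of the argument --- is to convert this symmetrized estimate (which controls $\sum_m c_m(t) \|f_n^{(\ell,m)}\|^2$ as an average over all relabelings of $G_n$) back into a statement about the specific, not $S_n$-invariant, function $g_n$ on the specific graph $G_n$. This closing step will likely require combining the averaging identity with the explicit description of the $V_m$ and the rate bound $\alpha_n \cdot \max_v \deg v \leq 1$ to control fluctuations around the average.
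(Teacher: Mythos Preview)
Your reformulation via semigroups and your identification of the $S_n$-isotypic decomposition $V_0 \oplus V_1 \oplus \cdots$ on $K_n$ are both correct and match the paper's setup (the paper expresses this as Proposition~3.1, describing the eigenvalues $\alpha j(n-j+1)$ and their multiplicities). However, your proposed route through the symmetrized semigroup $\bar P_t$ has a genuine gap precisely where you flag it: the quantity $\langle g_n, \bar P_t g_n\rangle = \sum_m c_m(t)\|f_n^{(\ell,m)}\|^2$ is an \emph{average} of $\langle g_n, P_t^{\sigma G_n} g_n\rangle$ over all relabelings $\sigma$, and there is no mechanism in your sketch for passing from this average back to the single term $\sigma = \mathrm{id}$. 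Since $g_n$ is not $S_n$-invariant, a lower bound on the average does not give a lower bound on the specific term you need, and nothing in the rate hypothesis $\alpha_n \leq 1/\max_v \deg v$ obviously controls the fluctuation across relabelings.

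The paper avoids this difficulty entirely by proving a \emph{deterministic subspace containment} rather than an averaged inequality: it shows that for suitable $k$ and $k'$,
\[
\Span\bigl\{\psi : \text{eigenvector of } -Q_n^{K_n},\ \text{eigenvalue} \leq k\bigr\}
\;\subseteq\;
\Span\bigl\{\chi : \text{eigenvector of } -Q_n^{G_n},\ \text{eigenvalue} \leq k'\bigr\}.
\]
The mechanism is algebraic. On $K_n$, an eigenvector at level $\ell$ with eigenvalue $\alpha j(n-j+1)$ is obtained by \emph{lifting} an eigenvector $\psi$ from level $j$ via $x \mapsto \sum_{y \leq x,\ \|y\|=j} \psi(y)$ (this is your $V_m$ with $m=j$, realized concretely). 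The key point you are missing is that this lifting operator intertwines the generators on \emph{any} graph, not just $K_n$: if $\psi$ is an eigenvector of $-Q^{(j)}$ on $G_n$ with eigenvalue $\mu$, then its lift is an eigenvector of $-Q^{(\ell)}$ on $G_n$ with the \emph{same} eigenvalue $\mu$. Combined with the trivial bound that every eigenvalue of $-Q^{(j)}$ on $G_n$ is at most $2\alpha_n j \max_v \deg v \leq 2j$, one sees that lifting \emph{anything} from level $j$ lands in the $G_n$-eigenspace with eigenvalue $\leq 2j$. Thus the entire $K_n$-eigenspace with eigenvalue $\leq k$ (which corresponds to $j \lesssim k$) sits inside the $G_n$-eigenspace with eigenvalue $\lesssim 4k$. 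The Fourier-mass comparison needed for both (i) and (ii) is then an immediate consequence of orthogonal projection onto nested subspaces, with no averaging or de-symmetrization required.
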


Given the positive answers to both  questions above, one might ask if being exclusion sensitive (or exclusion stable) is monotone with respect to adding edges to the graphs \( (G_n )_{n \geq 1} \). We will later see that this is true if we use the same rates \( (\alpha_n) \) for both graphs, but the following example shows that if we only use the  restriction on the rates from the previous theorem, that is if we only assume that for each sequence of graphs, \( \alpha_n \leq  1/ \max_{v \in V(G_n)} \deg v \), we will not always get monotonicity.
\begin{example}\label{example: first example}
 Let \( G_n \) be the graph with vertex set \( \{ 1,2, \ldots, 2n \} \) and an edge between two vertices \( i \) and \( j \) if and only if \( |i-j|=1 \mod 2n \).  Further, let \( G_n' \) be the graph obtained from \( G_n \) by adding an edge between each pair of vertices \( i ,j \in \{ n+1, n+2, \ldots, 2n\} \) that are not already connected by an edge. Finally, let \( G_n'' = K_{2n} \).
 
For \(x \in \{ 0,1\}^{V(G_n)}\), define \( f_n(x) = (-1)^{\left|x \cap \{ 1,3, \ldots n\}\right|} \). Then  \( (f_n)_{n \geq 1 } \)  is exclusion sensitive with respect to \( { (G_n, 1/2)_{n \geq 1}} \), exclusion stable with respect to \( { (G_n', 1/(n-1))_{n \geq 1} }\) and exclusion sensitive with respect to \(  {(G_n'', 1/(n-1))_{n \geq 1}} \). To see this, note that for an exclusion process on \( G_n \), at time \( \varepsilon \), about a proportion \( \varepsilon \) of the clocks on edges in the upper circle will have ticked. It follows that in the limit, almost surely the number of ones at vertices labeled with 1,3, \ldots, \( n \) will have changed arbitrarily many times, rendering the sequence of functions exclusion sensitive with respect to \( { (G_n, 1/2)_{n \geq 1}} \). By contrast, for an exclusion process on \( G_n' \) with rate \( 1/(n-1) \), there is a positive probability that none of the marbles on the upper part of the graph will have moved, why in this setting the sequence \( (f_n) \) is exclusion stable. On the last graph, the rate for each edge is the same up to a constant as for \( G_n' \), but now there are enough edges connected to the odd labeled vertices in the upper part of the graph for the sequence of functions to be exclusion sensitive.

\begin{figure}[H]
\centering
\begin{minipage}[b]{0.3\linewidth}
\centering 

\begin{tikzpicture}[scale = 1]
  \def\N{13}
  \def\M{7}
  \def\L{7}

  \foreach \i in {0,...,\N}{
    \draw ({cos((\i+0.5)*360/(\N+1))},{sin((\i+0.5)*360/(\N+1))}) -- ({cos((\i+1.5)*360/(\N+1))},{sin((\i+1.5)*360/(\N+1))});
  };

  \foreach \i in {0,...,\N}{
    \draw[fill = white] ({cos((\i+0.5)*360/(\N+1))},{sin((\i+0.5)*360/(\N+1))})  circle (2pt);
  };

  \foreach \i in {1,3,...,\M}{
    \draw ({1.3*cos((\M-\i+0.5)*360/(\N+1))},{1.3*sin((\M-\i+0.5)*360/(\N+1))})  node {\small \i};
 };
 \end{tikzpicture}

\caption*{(a) The graph \( G_7\).}
\end{minipage}
\hspace{-0em}
\begin{minipage}[b]{0.3\linewidth}
\centering
\begin{tikzpicture}[scale = 1]
  \def\N{13}
  \def\M{7}
  \def\L{7}
  
  \foreach \i in {0,...,\N}{
    \draw ({cos((\i+0.5)*360/(\N+1))},{sin((\i+0.5)*360/(\N+1))}) -- ({cos((\i+1.5)*360/(\N+1))},{sin((\i+1.5)*360/(\N+1))});
  };

  \foreach \i in {\L,...,\N}{
    \foreach \j in {\i,...,\N}{
    \draw ({cos((\i+0.5)*360/(\N+1))},{sin((\i+0.5)*360/(\N+1))}) -- ({cos((\j+0.5)*360/(\N+1))},{sin((\j+0.5)*360/(\N+1))});
    };
  };

  \foreach \i in {0,...,\N}{
    \draw[fill = white] ({cos((\i+0.5)*360/(\N+1))},{sin((\i+0.5)*360/(\N+1))})  circle (2pt);
  };

  \foreach \i in {1,3,...,\M}{
    \draw ({1.3*cos((\M-\i+0.5)*360/(\N+1))},{1.3*sin((\M-\i+0.5)*360/(\N+1))})  node {\small \i};
 };

 \end{tikzpicture}
\caption*{(b) The graph \( G_{7}'\).}
\end{minipage}
\hspace{-0em}
\begin{minipage}[b]{0.3\linewidth}
\centering
\begin{tikzpicture}[scale = 1]
  \def\N{13}
  \def\M{7}
  \def\L{7}

  \foreach \i in {0,...,\N}{
    \foreach \j in {\i,...,\N}{
    \draw ({cos((\i+0.5)*360/(\N+1))},{sin((\i+0.5)*360/(\N+1))}) -- ({cos((\j+0.5)*360/(\N+1))},{sin((\j+0.5)*360/(\N+1))});
    };
  };

  \foreach \i in {0,...,\N}{
    \draw[fill = white] ({cos((\i+0.5)*360/(\N+1))},{sin((\i+0.5)*360/(\N+1))})  circle (2pt);
  };

  \foreach \i in {1,3,...,\M}{
    \draw ({1.3*cos((\M-\i+0.5)*360/(\N+1))},{1.3*sin((\M-\i+0.5)*360/(\N+1))})  node {\small \i};
 };

 \end{tikzpicture}
\caption*{(c) The graph \( G_{7}''\).}
\end{minipage}
\caption{The three graphs described in Example~\ref{example: first example} when \( n = 7 \).}
\end{figure}

\end{example}

In the previous example, one thing that made monotonicity fail was that the structure of the graphs were different enough to suggest that very different rates should be used for the corresponding exclusion processes. It is therefore natural to ask whether using the same rates before and after adding edges would be a strong enough assumption to get monotonicity.

We now state our second main result, which shows that for any fixed sequence of rates \( (\alpha_n)_{n\geq 1} \), any sequence of graphs \( (G_n)_{n\geq 1} \) and any sequence of functions \( f_n \colon V(G_n) \to \{0,1 \}   \), the properties of being exclusion sensitive and exclusions stable with respect to \( (G_n, \alpha_n )_{n\geq 1} \) are monotone with respect to adding edges to the graphs in \( (G_n)_{n\geq 1 } \).

\begin{theorem}
Let \( (G_n)_{ n \geq 1} \) and \( (G_n')_{n\geq 1} \) be two sequences of finite connected graphs with \( V(G_n) = V(G_n') \) and \( E(G_n') \subseteq E(G_n) \), and let \( (\alpha_n) \) be a sequence of strictly positive real numbers. Let \( f_n \colon \{ 0,1 \}^{V(G_n)} \to \{0,1 \}  \) be a nondegenerate sequence of functions. Then
\begin{enumerate}
\item[(i)] if \( (f_n)_{n \geq 1} \) is XS with respect to \( (G_n', \alpha_n )_{n \geq 1} \), then \( (f_n)_{n \geq 1} \) is XS with respect to \( (G_n, \alpha_n)_{n \geq 1} \). 
\item[(ii)] if \( (f_n)_{n \geq 1} \) is XStable with respect to \( (G_n, \alpha_n )_{n \geq 1} \), then \( (f_n)_{n \geq 1} \) is XStable with respect to \( (G_n', \alpha_n)_{n \geq 1} \). 
\end{enumerate}
\label{proposition: monotonicity}
\end{theorem}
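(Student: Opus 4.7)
Both parts follow from a single monotonicity inequality for the covariance: for every $n$, $t \geq 0$ and every $f_n$,
\[
\Cov\bigl(f_n(X_0^{(n)}), f_n(X_t^{G_n})\bigr) \;\leq\; \Cov\bigl(f_n(X_0^{(n)}), f_n(X_t^{G_n'})\bigr).
\]
Given this, part (i) is immediate at $t=1$: if the covariance on $G_n'$ tends to zero then so does the smaller covariance on $G_n$. For part (ii), the identity $\E[(f_n(X_0)-f_n(X_\varepsilon))^2] = 2\Var(f_n) - 2\Cov(f_n(X_0), f_n(X_\varepsilon))$ shows that the XStable error on $G_n'$ is bounded above by the one on $G_n$, which vanishes in the iterated limit by assumption; hence XStability transfers.

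To establish the covariance inequality I would use the labelled-particle (graphical) construction, which produces a random permutation $\sigma_t^G$ of $V(G_n)$ such that $X_t^G(v) = X_0\bigl(\sigma_t^{G,-1}(v)\bigr)$. Expanding $f_n$ in the Walsh basis $\{\chi_S\}_{S \subseteq V(G_n)}$ and averaging over $X_0 \sim \pi_n$ gives
\[
\Cov\bigl(f_n(X_0), f_n(X_t^G)\bigr) \;=\; \sum_{k \geq 1}\sum_{S,T \in \binom{V(G_n)}{k}} \hat f_n(S)\hat f_n(T)\, p_t^{G,k}(S,T),
\]
where $p_t^{G,k}(S,T) = \pr(\sigma_t^G(S)=T)$ is the transition kernel of the symmetric exclusion process on $G$ restricted to the $k$-particle slice $\binom{V(G_n)}{k}$. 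Since the coefficients $\hat f_n(S)$ do not depend on the graph, the covariance inequality reduces to the slice-wise operator comparison $P_t^{G_n,k} \preceq P_t^{G_n',k}$, in the quadratic-form sense, on $L^2\bigl(\binom{V(G_n)}{k}, \pi_n^{(k)}\bigr)$ for each $k \geq 1$.

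The main obstacle is this slice-wise operator inequality. On each slice the generator equals $-\mathcal{L}_G^{(k)} = \alpha_n \sum_{e \in E(G)}(I - T_e^{(k)})$, where $T_e^{(k)}$ is the coordinate-swap involution; each $(I - T_e^{(k)})/2$ is an orthogonal projection, so adding the edges of $E(G_n)\setminus E(G_n')$ increases the generator in the Loewner order by a sum of orthogonal projections. The subtlety is that exponentiation is not operator monotone for arbitrary positive semi-definite operators, so the semigroup comparison does not follow formally from the comparison of generators; one really needs the specific involutive structure of the $T_e^{(k)}$. To handle this I would use the probabilistic representation $P_t^{G,k}f = \E[f\circ \sigma_t^{G,-1}]$ together with a Poisson-thinning coupling: the transpositions driving $\sigma_t^{G_n}$ split into those used for $\sigma_t^{G_n'}$, shared between the two processes, and independent extras drawn uniformly from $E(G_n)\setminus E(G_n')$ at Poisson times. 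Averaging over these extras acts as convolution with a symmetric probability measure on $S_n$ and can only shrink the positive-definite function $\pi \mapsto \E_{X_0}\bigl[\bar f_n(X_0)\bar f_n(X_0\circ \pi^{-1})\bigr]$ on the symmetric group. Making this averaging argument rigorous, and piecing it back together with the Walsh-Fourier decomposition above to recover the slice-wise semigroup inequality, is the main work.
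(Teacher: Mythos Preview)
Your reduction to the slice-wise semigroup comparison $P_t^{G_n,k} \preceq P_t^{G_n',k}$ is clean, but that inequality is \emph{false}, so the convolution/averaging heuristic cannot be salvaged. Take $n=4$, $k=1$, $G'$ the path $1\text{--}2\text{--}3\text{--}4$, $G = G' \cup \{\{1,3\}\}$, and $\alpha=1$. One-particle exclusion is the continuous-time random walk, with generator minus the graph Laplacian $L$. The vector $g=(1,-1,-1,1)$ is an $L_{G'}$-eigenvector with eigenvalue $2$, so $\langle g, e^{-tL_{G'}} g\rangle = 4e^{-2t}$. But $L_G$ has eigenvalue $1$ with eigenvector $(1,1,0,-2)$, onto which $g$ projects nontrivially; a short computation gives $\langle g, e^{-tL_G} g\rangle = \tfrac{2}{3}e^{-t}+2e^{-3t}+\tfrac{4}{3}e^{-4t}$, which \emph{exceeds} $4e^{-2t}$ for all sufficiently large $t$ (already at $t=2$). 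Thus adding an edge can strictly increase $\langle g, P_t^{(k)} g\rangle$. The interleaving of the extra transpositions with the shared ones genuinely destroys the convolution structure you invoke --- conditioning on the $G'$-rings leaves you with $\E_R[\langle f, T_{A_0}T_{R_1}T_{A_1}\cdots f\rangle]$ versus $\langle f, T_{A_0A_1\cdots}f\rangle$, and $P\le I$ does not give $\langle g, Ph\rangle \le \langle g,h\rangle$ when $g\neq h$ --- and no positive-definiteness argument on $S_n$ can repair this, because the target inequality is simply untrue.

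The paper bypasses the semigroup comparison altogether. It uses only the Dirichlet-form inequality $\langle -Q'f,f\rangle \le \langle -Qf,f\rangle$, which is immediate since $E(G')\subseteq E(G)$ merely adds nonnegative summands in~\eqref{equation: Rayleigh quotient as sum}. Combined with the spectral characterizations of exclusion sensitivity and stability (Propositions~\ref{proposition: spectral XS} and~\ref{proposition: spectral XStability}), this yields a two-scale comparison of the spectral mass of $f$ in the two eigenbases (Lemma~\ref{lemma: monotonicity inequality}): for any cutoffs $k,k'>0$,
\[
\sum_{i,\ell:\,\mu_{i,\ell}>k'} \check f(i,\ell)^2 \;\le\; \Bigl(\sqrt{\tfrac{k}{k'}\textstyle\sum_{i,\ell:\,0<\lambda_{i,\ell}\le k}\hat f(i,\ell)^2} \;+\; \sqrt{\textstyle\sum_{i,\ell:\,\lambda_{i,\ell}>k}\hat f(i,\ell)^2}\Bigr)^{2}.
\]
One then chooses $k$ from the hypothesis on $(G_n,\alpha_n)$ and sends $k'\to\infty$ (or vice versa). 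The point is that XS/XStable are statements about \emph{where the spectral mass sits}, and the Dirichlet-form comparison is exactly strong enough to transfer such statements between the two eigenbases --- without ever needing the pointwise-in-$t$ covariance inequality you were aiming for.
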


For more general versions of this theorem, see Remarks~\ref{remark: disconnected}~and~\ref{remark: different rates}.


The rest of this paper will be structured as follows. In the next section, we give notation for the eigenvectors and eigenvalues for the different processes with which we will be concerned in the rest of this paper. We also give spectral equivalences of exclusion stability and exclusion sensitivity. In the third section we study the structure of these eigenvectors and eigenvalues a bit more closely. In particular, we prove some results concerning these when we have a symmetric exclusion process with respect to \( (K_n, \alpha_n)_{n\geq 1} \) for some sequence \( (\alpha_n)_{n\geq 1} \) of real numbers. In the fourth section, we give a proof of our first main result, Theorem~\ref{theorem:main result}. Finally, in the last section, we give a proof Theorem~\ref{proposition: monotonicity}.

\section{Spectral equivalences of exclusion sensitivity and stability}

All of the results presented in this paper will use methods from  Fourier analysis. In this section we will define the functions which we will use as a basis, and derive some simple results.

Let \( X^{(n, \ell)} \) be a symmetric exclusion process with respect to \( (G_n, \alpha, \ell) \). Then \( X^{(n, \ell)} \) is a Markov process and has a generator \( Q_n^{(\ell)} \). For functions \( f,g \colon \binom{V(G_n)}{\ell} \to \mathbb{R}\), \( \langle f, g \rangle = \langle f, g\rangle_{\pi_n^{(\ell)}} \coloneqq \E[f(X^{(n, \ell)}_0)g(X^{(n, \ell)}_0)] \) is an inner product. As \( X^{(n,\ell)} \) is  reversible and irreducible, we can find a set \( \{ \psi_ i^{(n,\ell)}\}_i \) of eigenvectors of \( { -Q_n^{(\ell)}} \), with corresponding eigenvalues 
\begin{equation}
0 = \lambda_1^{(n,\ell)} < \lambda_2^{(n, \ell)} \leq \lambda_3^{(n, \ell)} \leq \ldots \leq \lambda_{\binom{|V(G_n)|}{\ell}}^{(n, \ell)}
\end{equation}
such that  \( \{ \psi_ i^{(n,\ell)}\}_i \) is an orthonormal basis with respect to \( \langle \cdot, \cdot \rangle \) for the space of real valued functions on \( \binom{V(G_n)}{\ell} \). Note  that we can assume that \( \psi_1^{(n , \ell) } \equiv 1 \) for all \( \ell \) and \( n \).

Next, for all \( t \geq 0 \), let \( \smash{H_t^{(n,\ell)}}  \)  denote the continuous time Markov semigroup given by
\[
H_t^{(n, \ell)} = \exp(tQ_n^{(\ell)}) .
\]
In other words, \( H_t^{(n,\ell)} \) operates on a function \( f \) with domain \( \binom{V(G_n)}{\ell}  \) by
\[
H_t^{(n, \ell)} f(x) = \E[f(X_t^{(n, \ell)}) \mid X_0^{(n, \ell)} = x].
\]

The eigenvectors \( \{ \psi_ i^{(n, \ell)}\}_i \) will be eigenvectors of \( H_t^{(n, \ell)} \) as well, with corresponding eigenvalues \( \{ e^{-\lambda_i^{(n, \ell)} t} \}_i \).
Since the set \( \{ \psi_ i^{(n, \ell )}\}_i \) is an orthonormal basis, for any \( f \colon \binom{V(G_n)}{\ell} \to \mathbb{R} \) we can write
\[
f(x) = \sum_{i=1}^{\binom{|V(G_n)|}{\ell}} \langle f, \psi_i^{(n, \ell)} \rangle \, \psi_i^{(n, \ell)} (x).
\]

To simplify notations, we will write  \( \hat f^{(\ell)}(i) \) instead of \( \langle f, \psi_i^{(n, \ell)} \rangle \). Using these Fourier coefficients, for any function \( f \colon \binom{V(G_n)}{\ell} \to \mathbb{R}\) we have that
\[
\E[f(X_0^{(n, \ell)})] = \langle f, 1 \rangle = \langle f, \psi_1^{(n, \ell)} \rangle = \hat f^{(\ell)} (1)
\]
and
\[
\Var(f(X_0^{(n, \ell)})) = \langle f, f \rangle - \hat f^{(\ell)}(1)^2 = \sum_{i \geq 2} \hat f^{(\ell)}(i)^2.
\]

Another well known (see e.g. (1.8) on page 5 in~\cite{chung1996}) characterization of the eigenvalues \(\{\lambda_i^{(n,\ell)}\}_i\) which will be useful for us later is
\begin{equation}
\lambda_i^{(n, \ell)}
= 
\min_{f \colon \langle f, \psi_{i'}^{(n, \ell)} \rangle =0\textnormal{ for all } i' < i } \frac{\langle -Q_n^{(\ell)}f, f \rangle}{\langle f, f \rangle},
\label{equationIV: rayleigh quotient}
\end{equation}
where  the minimum is attained by the corresponding eigenvector \( \psi_i^{(n, \ell)} \). The ratio on the right  hand side of~\eqref{equationIV: rayleigh quotient} is called the Rayleigh quotient of \( -Q_n^{(\ell)} \). It is easy to see that if \( \psi \) is an eigenvector of \( -Q_n^{(\ell)} \), then the Rayleigh quotient is the corresponding eigenvalue.


Using the definition of the generator \( Q_n^{(\ell)} \), we can write
\begin{align}
{2\langle -Q_n^{(\ell)}f, f \rangle} &= {2\alpha \!\!\!\!\!\!\!\! \sum_{x,y \in \binom{V(G_n)}{\ell}\colon x \sim y} \!\!\!\!   \pi_n^{(\ell)}(x)  f(x)(f(x)-f(y))} \nonumber
\\&= {\alpha \!\!\!\!\!\!\!\!  \sum_{x,y \in \binom{V(G_n)}{\ell} \colon x \sim y}\!\!\!\!   \pi_n^{(\ell)}(x)  (f(x)-f(y))^2} \nonumber
\\&= { \alpha \, {\textstyle\binom{|V(G_n)|}{\ell}^{-1}} \!\!\!\!\!\!\!\!  \sum_{x,y \in \binom{V(G_n)}{\ell} \colon x \sim y}\!\!\!\!     (f(x)-f(y))^2}
. \label{equation: Rayleigh quotient as sum}
\end{align}
It follows that if \( Q_n \) is the generator of the exclusion process with respect to \( (G_n, \alpha, \ell) \) and \( Q_n' \) the  generator of the exclusion process with respect to \( (G_n', \alpha, \ell) \) for some graph \( G_n' \) satisfying \( V(G_n)=V(G_n') \) and \( E(G_n') \subseteq E(G_n) \), then for any function \( f \colon \binom{V(G_n)}{\ell} \to \mathbb{R} \),
\begin{equation}
\frac{\langle -Q_n' f, f \rangle}{\langle f, f \rangle}
\leq
\frac{\langle -Q_n f, f \rangle}{\langle f, f \rangle},
\label{equation: Q inequality}
\end{equation}
as for the right hand side of this inequality, the sum in~\eqref{equation: Rayleigh quotient as sum} simply contains more terms.

Above, we listed some simple properties of the eigenvectors of the generator \( Q_n^{(\ell)} \) of an exclusion process with a fixed number of black marbles. The next lemma relates these eigenvectors to  the  eigenvectors of the generator \( Q_n \).

\begin{lemma}
Let \( G_n\) be a  finite connected graph and let \( \alpha_n\) be a strictly positive real number. For each \({ \ell \in \{ 0,1,\ldots, |V(G_n)| \} }\), let \( Q_n^{(\ell)} \) be the generator of the exclusion process with respect to \( (G_n, \alpha_n, \ell) \), and let  \( \{ \psi_i^{(n,\ell)}\}_{i} \) be an orthonormal basis of eigenvectors of \( -Q_n^{(\ell)} \) with corresponding eigenvalues \( \{ \lambda_i^{(n,\ell)}\}_{i} \). Define \( \psi_{i,\ell}^{(n)} \colon \{ 0,1 \}^n \to \mathbb{R} \) by
\begin{equation}
 \psi_{i,\ell}^{(n)}(x) \coloneqq \begin{cases} \sqrt{\frac{2^n}{\binom{n}{\ell}}} \cdot  \psi_i^{(n,\ell)}(x)  &\textnormal{if } x \in \binom{V(G_n)}{\ell}  \cr 0 &\textnormal{otherwise.} \end{cases}
\label{equation: full basis}
\end{equation}
Then \( \{   \psi_{i,\ell}^{(n)}\}_{i, \ell} \) is an orthonormal basis for \( -Q_n \), where \( Q_n \) is the generator of the symmetric exclusion process with respect to \( (G_n, \alpha_n)\), with corresponding eigenvalues \( \{   \lambda_{i}^{(n,\ell)}  \}_{i, \ell}\).
\label{lemma: general eigenvectors}
\end{lemma}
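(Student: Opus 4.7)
The plan is to exploit the fact that the exclusion dynamics preserves the number of black marbles, so $Q_n$ block-decomposes across the level sets $\binom{V(G_n)}{\ell}$, and then match normalizations between the measures $\pi_n$ and $\pi_n^{(\ell)}$.

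First I would observe that if $f \colon \{0,1\}^{V(G_n)} \to \mathbb{R}$ is supported on $\binom{V(G_n)}{\ell}$, then $Q_n f$ is as well. Indeed, by definition of the generator, $Q_n f(x) = \alpha_n \sum_{(u,v) \in E(G_n)} (f(x^{uv}) - f(x))$, where $x^{uv}$ denotes the configuration obtained by swapping the marbles at $u$ and $v$; since $x^{uv}$ has the same number of black marbles as $x$, both $f(x^{uv})$ and $f(x)$ vanish unless $x \in \binom{V(G_n)}{\ell}$. Moreover, the restriction of $Q_n$ to the subspace $W_\ell$ of functions supported on $\binom{V(G_n)}{\ell}$ is, after identifying $W_\ell$ with functions on $\binom{V(G_n)}{\ell}$, exactly $Q_n^{(\ell)}$.

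Next I would verify orthonormality. Using that $\pi_n$ is uniform on $\{0,1\}^{V(G_n)}$ and $\pi_n^{(\ell)}$ is uniform on $\binom{V(G_n)}{\ell}$, for any $i,j$ and $\ell \neq \ell'$ the supports of $\psi_{i,\ell}^{(n)}$ and $\psi_{j,\ell'}^{(n)}$ are disjoint, so $\langle \psi_{i,\ell}^{(n)}, \psi_{j,\ell'}^{(n)} \rangle_{\pi_n} = 0$. For $\ell = \ell'$, a direct computation with the scaling factor $\sqrt{2^n/\binom{n}{\ell}}$ gives
\[
\langle \psi_{i,\ell}^{(n)}, \psi_{j,\ell}^{(n)} \rangle_{\pi_n}
= \frac{1}{2^n} \cdot \frac{2^n}{\binom{n}{\ell}} \sum_{x \in \binom{V(G_n)}{\ell}} \psi_i^{(n,\ell)}(x)\psi_j^{(n,\ell)}(x)
= \langle \psi_i^{(n,\ell)}, \psi_j^{(n,\ell)} \rangle_{\pi_n^{(\ell)}} = \delta_{ij},
\]
using orthonormality of $\{\psi_i^{(n,\ell)}\}_i$ with respect to $\pi_n^{(\ell)}$. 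Since the scaling constant is just a positive scalar, the eigenvector property transfers: on $\binom{V(G_n)}{\ell}$ we have $Q_n \psi_{i,\ell}^{(n)} = \sqrt{2^n/\binom{n}{\ell}} \cdot Q_n^{(\ell)} \psi_i^{(n,\ell)} = -\lambda_i^{(n,\ell)} \psi_{i,\ell}^{(n)}$, and off $\binom{V(G_n)}{\ell}$ both sides vanish by the block-decomposition observation.

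Finally I would check that $\{\psi_{i,\ell}^{(n)}\}_{i,\ell}$ is a basis by a dimension count: the total number of basis vectors is $\sum_{\ell=0}^{|V(G_n)|} \binom{|V(G_n)|}{\ell} = 2^{|V(G_n)|}$, which is the dimension of the space of real-valued functions on $\{0,1\}^{V(G_n)}$; together with orthonormality this forces the family to span. I expect no serious obstacle here — the only delicate point is getting the normalization factor in~\eqref{equation: full basis} exactly right so that the $\pi_n$-inner product on each level set reduces to the $\pi_n^{(\ell)}$-inner product, which is precisely the role of the factor $\sqrt{2^n/\binom{n}{\ell}}$.
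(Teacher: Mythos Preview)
Your proposal is correct and follows essentially the same approach as the paper: verify the eigenvector equation by computing $-Q_n\psi_{i,\ell}^{(n)}(x)$ separately for $x$ in and out of the level set, and then check orthonormality via the scaling factor. Your write-up is in fact more complete than the paper's, which omits the orthonormality computation and the dimension count entirely; your explicit block-decomposition framing is just a cleaner way of packaging the same calculation the paper does line by line.
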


\begin{proof}
Let \( x \in \{ 0,1 \}^n \) and suppose first that \( x \in \binom{V(G_n)}{\ell} \). Then
\begin{equation*}
\begin{split}
-Q_n \psi_{i,\ell}^{(n)}(x) 
&= 
\sum_{y \sim x} \alpha (\psi_{i,\ell}^{(n)}(x) - \psi_{i,\ell}^{(n)}(y))
\\&=
\sqrt{\frac{2^n}{\binom{n}{\ell}}} \cdot  \sum_{y \sim x} \alpha (\psi_{i}^{(n,\ell)}(x) - \psi_{i}^{(n,\ell)}(y)) 
\\&=
\sqrt{\frac{2^n}{\binom{n}{\ell}}} \cdot  \left( -Q_n^{(\ell)} \psi_i^{(n,\ell)}(x)  \right)
\\&=
\sqrt{\frac{2^n}{\binom{n}{\ell}}} \cdot  \lambda_i^{(n,\ell)}\psi_i^{(n,\ell)}(x) 
\\&=
  \lambda_i^{(n,\ell)}\psi_{i,\ell}^{(n)}(x).
\end{split}
\end{equation*}
On the other hand, if \( x \in \binom{V(G_n)}{\ell} \), then by definition, \(  \psi_{i,\ell}^{(n)}(x)  = 0 \), why clearly,
\begin{equation*}
-Q_n \psi_{i,\ell}^{(n)}(x) 
= 
0
=
  \lambda_i^{(n,\ell)} \cdot 0
=
  \lambda_i^{(n,\ell)}\psi_{i,\ell}^{(n)}(x).
\end{equation*}
Consequently, the equation
\begin{equation*}
-Q_n \psi_{i,\ell}^{(n)}(x) 
=
  \lambda_i^{(n,\ell)}\psi_{i,\ell}^{(n)}(x)
\end{equation*}
is valid for all \( x \in \{ 0,1 \}^n \). This shows that \( \psi_{i,\ell}^{(n)} \) is an eigenvector of \( -Q_n \) with corresponding eigenvalue \( \lambda_{i,\ell}^{(n)} = \lambda_i^{(n,\ell)} \). 

The claim of orthonormality follows similarly, and is therefore omitted here.
\end{proof}

\begin{remark}
Note that the eigenvectors given by~\eqref{equation: full basis} with eigenvalue equal to zero is independent of the chosen graph \( G_n  \) as long as \( G_n \) is connected.
\label{remark: equal eigenvectors}
\end{remark}

Below and in the rest of this paper, whenever \( f,g \colon \{ 0,1\}^{V(G_n)} \to \mathbb{R} \), we will write \( \langle f, g \rangle = \langle f, g \rangle_{\pi_n}  \coloneqq \E[f(X^{(n)}_0) g(X^{(n)}_0)]\) and whenever we calculate the Fourier coefficients of some real valued  function \( {f \colon \{ 0,1 \}^{V(G_n)} \to \mathbb{R}} \) with respect to the basis \( \{ \psi_{i, \ell} \}_{i, \ell} \) given by~\eqref{equation: full basis},  we write \( \hat f(i,\ell) \coloneqq \langle f, \psi_{i,\ell}^{(n)} \rangle \).
Also, we will for \( x \in \{ 0,1 \}^{V(G_n)} \) let \( \| x \| \coloneqq \sum_{v \in V(G_n)} x(v) \).

The next result provides a spectral characterization of what it means to be exclusion sensitive, and it is the equivalent definition it provides that we will use in all subsequent results. Together with Proposition~\ref{proposition: spectral XStability} this result  
is a complete analogue of Theorem 1.9 in~\cite{schramm2000}, and similar analogues for exclusion process, although for a different set of eigenvectors, can be found in~\cite{bgs2013} (Proposition~3.1 and Proposition~3.2 respectively). The proofs of both these results very similar to the proofs in the original setting (see e.g.~\cite{gs2014}) after conditioning on \( \| X_0^{(n)} \| \).

\begin{proposition}
Let \( (G_n)_{n \geq 1} \) be a sequence of finite connected graphs, let \( (\alpha_n)_{n \geq 1} \) be a sequence of positive real numbers and for each \( {n \geq 1} \), let \( X^{(n)} \) be the exclusion process with respect to \( (G_n, \alpha_n)_{n \geq 1} \).  Further let \( \{ \psi_{i,\ell}^{(n)} \}_{i,\ell} \) be the orthonormal basis of eigenvectors defined in Lemma~\ref{lemma: general eigenvectors}, and let \( \{ \lambda_{i, \ell}^{(n)}\}_{i,\ell} \)  be the corresponding eigenvalues. Then a sequence  \( f_n \colon \{ 0,1 \}^{V(G_n)} \to \{0,1 \}  \) is exclusion sensitive with respect to \( (G_n, \alpha_n )_{n \geq 1} \) if and only if
\begin{enumerate}
\item[(i)] \( \lim_{n \to \infty}\Var (\E[f(X_0^{(n)}) \mid  \| X_0^{(n)} \| ]) = 0 \) and
\item[(ii)] for all \( k > 0 \), 
\[
\lim_{n \to \infty} \sum_{i, \ell \colon 0 < \lambda_{i, \ell}^{(n)} \leq k } \hat f_n(i, \ell)^2 = 0.
\]
\end{enumerate}
\label{proposition: spectral XS} 
\end{proposition}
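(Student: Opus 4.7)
The plan is to use the spectral/Fourier expansion of $f_n$ in the basis $\{\psi_{i,\ell}^{(n)}\}$ from Lemma~\ref{lemma: general eigenvectors}. Since each $\psi_{i,\ell}^{(n)}$ is an eigenvector of $H_1^{(n)}$ with eigenvalue $e^{-\lambda_{i,\ell}^{(n)}}$, conditioning on $X_0^{(n)}$ and applying Parseval yields
\[
\E[f_n(X_0^{(n)})f_n(X_1^{(n)})] = \langle f_n, H_1^{(n)} f_n\rangle = \sum_{i,\ell} e^{-\lambda_{i,\ell}^{(n)}} \hat f_n(i,\ell)^2.
\]
Subtracting $\E[f_n(X_0^{(n)})]^2$ gives the covariance, which I would split according to whether $\lambda_{i,\ell}^{(n)}=0$ or $>0$. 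Because every $G_n$ is connected, each level chain is irreducible, so within level $\ell$ the eigenvalue $0$ is simple with eigenvector $\psi_1^{(n,\ell)}\equiv 1$; the zero-eigenvalue indices for $-Q_n$ are therefore exactly the pairs $(1,\ell)$.

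The next step is to identify the zero-eigenvalue contribution with $\Var(\E[f_n(X_0^{(n)})\mid \|X_0^{(n)}\|])$. A direct calculation from the definition of $\psi_{1,\ell}^{(n)}$ in~\eqref{equation: full basis}, together with $\pr(\|X_0^{(n)}\|=\ell)=\binom{n}{\ell}/2^n$, shows that $\hat f_n(1,\ell)^2 = \pr(\|X_0^{(n)}\|=\ell)\,g_n(\ell)^2$ where $g_n(\ell):=\E[f_n(X_0^{(n)})\mid \|X_0^{(n)}\|=\ell]$, so $\sum_\ell \hat f_n(1,\ell)^2 = \E[g_n(\|X_0^{(n)}\|)^2]$. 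Combined with $\E[f_n(X_0^{(n)})]^2 = \E[g_n(\|X_0^{(n)}\|)]^2$, this produces
\[
\Cov(f_n(X_0^{(n)}), f_n(X_1^{(n)})) = \Var\bigl(\E[f_n(X_0^{(n)})\mid \|X_0^{(n)}\|]\bigr) + \sum_{\substack{i,\ell\\ \lambda_{i,\ell}^{(n)}>0}} e^{-\lambda_{i,\ell}^{(n)}} \hat f_n(i,\ell)^2.
\]
Both summands on the right are nonnegative, so the left side tends to $0$ if and only if each summand does; the vanishing of the first summand is exactly condition (i).

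Finally, the equivalence between vanishing of the second summand and condition (ii) is a routine tail-splitting. In one direction, for any fixed $k>0$ we have $\sum_{0<\lambda_{i,\ell}^{(n)}\leq k}\hat f_n(i,\ell)^2 \leq e^{k}\sum_{\lambda_{i,\ell}^{(n)}>0} e^{-\lambda_{i,\ell}^{(n)}} \hat f_n(i,\ell)^2$, so vanishing of the weighted sum forces (ii). For the other direction, given $\varepsilon>0$ pick $k$ with $e^{-k}<\varepsilon$ and use Parseval together with $f_n$ being $\{0,1\}$-valued to bound $\sum_{\lambda_{i,\ell}^{(n)}>k} e^{-\lambda_{i,\ell}^{(n)}} \hat f_n(i,\ell)^2 \leq e^{-k}\sum_{i,\ell}\hat f_n(i,\ell)^2 \leq e^{-k}<\varepsilon$; then the remaining finite-$\lambda$ part is controlled by condition (ii). I do not expect a substantive obstacle: the only point requiring any care is the bookkeeping that cleanly extracts $\Var(\E[f_n(X_0^{(n)})\mid \|X_0^{(n)}\|])$ from the zero-eigenspace, and the rest is the same semigroup/Parseval manipulation as in the classical Boolean-cube setting.
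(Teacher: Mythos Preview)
Your proposal is correct and follows essentially the same approach as the paper: both expand $\E[f_n(X_0^{(n)})f_n(X_t^{(n)})]=\sum_{i,\ell}e^{-t\lambda_{i,\ell}^{(n)}}\hat f_n(i,\ell)^2$ via the semigroup action on the eigenbasis, identify the zero-eigenvalue contribution with $\Var(\E[f_n(X_0^{(n)})\mid\|X_0^{(n)}\|])$, and finish with the same tail-splitting argument. The only cosmetic difference is that the paper organizes the computation through the law-of-total-covariance identity $\Cov(X,Y)=\E[\Cov(X,Y\mid Z)]+\Cov(\E[X\mid Z],\E[Y\mid Z])$ with $Z=\|X_0^{(n)}\|$, whereas you extract the conditional-variance term directly from the zero-eigenspace; the resulting decomposition and the remainder of the argument are identical.
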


\begin{proof}
First recall the well known result stating that for three random variables \( X \), \( Y \) and \( Z \),
\[
\Cov (X,Y) = \E \left[\Cov(X,Y \mid Z) \right]+ \Cov \left( \E [X \mid Z], \E [Y \mid Z]\right).
\]
Fix \( \varepsilon > 0 \) and set \( X = f(X_0^{(n)}) \), \( Y = f(X_\varepsilon^{(n)})  \) and \( Z = \| X_0^{(n)} \|  \) to obtain
\begin{equation}
\begin{split}
&\Cov(f(X_0^{(n)} ),f(X_{\varepsilon}^{(n)})) 
\\&\hspace{3em}=
\sum_{\ell=0}^n P(\| X_0^{(n)} \| = \ell) \cdot \Cov \left( f(X_0^{(n)}), f(X_\varepsilon^{(n)}) \mid \| X_0^{(n)}\| = \ell \right)
\\&\hspace{16em}+
\Var \left(  \E \left[ f(X_0^{(n)}) \mid  \| X_0^{(n)} \|  \right] \right).
\end{split}
\label{equation: removing the variance}
\end{equation}
We will now rewrite the term  \( \Cov \left( f(X_0^{(n)}), f(X_\varepsilon^{(n)}) \mid \| X_0^{(n)}\| = \ell \right) \) in the expression above. To this end, note first that for any \( \ell \in \{ 0,1,\ldots, n \} \) and any \( t > 0 \),
\begin{equation*}
\begin{split}
\E \left[ f_n(X_0^{(n)})f_n(X_{t}^{(n)}) \right]
&=
\E \left[ f_n(X_0^{(n)}) \E \left[ f_n(X_{t}^{(n)})  \mid X_0^{(n)} \right]\right]
\\&=
\E \left[f_n(X_0^{(n)})H^{(n)}_{t} f_n(X_0^{(n)}) \right]
\\&=
 \left\langle f_n, H^{(n)}_{t} f_n\right\rangle.
\end{split}
\end{equation*}
Writing \( f_n \) as \( f_n = \sum_{i,\ell} \hat f_n(i,\ell) \psi_{i, \ell}^{(n)} \) it follows that
\begin{equation*}
\begin{split}
 \left\langle f_n, H^{(n)}_{t} f_n\right\rangle
&=
\left\langle \sum_{i,\ell}\hat f_n(i,\ell) \psi_{i,\ell}^{(n)}, \sum_{j,\ell'} \hat f_n(j,\ell') H^{(n)}_{t} \psi^{(n)}_{j,\ell'} \right\rangle
\\&=
 \left\langle
\sum_{i,\ell}\hat f_n(i,\ell) \psi_{i,\ell}^{(n)} ,\sum_{j,\ell'} \hat f_n(j,\ell') e^{ - t  \lambda_{j,\ell'}^{(n)} }\psi^{(n)}_{j,k'} 
\right\rangle
\\&=
\sum_{i,j,\ell,\ell'} e^{-t \lambda_{j,\ell'}^{(n)} } \hat f_n(i,\ell) \hat f_n(j,\ell')   \left\langle \psi_{i,\ell}^{(n)},\psi^{(n)}_{j,\ell'} \right\rangle.
\end{split}
\end{equation*}
As  \( \{ \psi_{i,\ell}^{(n)} \}_{i, \ell} \) is an orthonormal set, summing up, we obtain
\begin{equation}
\begin{split}
\E \left[ f_n(X_0^{(n)})f_n(X_{t}^{(n)}) \right]
=
\sum_{i,\ell} e^{-t \lambda_{i,\ell}^{(n)} } \hat f_n(i,\ell)^2 .
\end{split}
\label{equation: expected value of product}
\end{equation}

Using that 
\begin{align*}
\E \left[ f_n (X_0^{(n)}) \mid \|X_0^{(n)} \| = \ell \right] 
& = \sum_{x \in \binom{V(G_n)}{\ell}} \pi_n^{(\ell)}(x) f_n(x)
 = \sum_{x \in \binom{V(G_n)}{\ell}} \pi_n^{(\ell)}(x) f_n(x) \cdot 1
\\& = \sum_{x \in \binom{V(G_n)}{\ell}} \pi_n^{(\ell)}(x) f_n(x) \cdot \psi_1^{(n,\ell)}
 = \langle f_n, \psi_1^{(n,\ell)} \rangle
\\&= \hat f_{n,\ell}(1) 
= \left( P(\| X_0^{(n)} \| = \ell ) \right)^{-1/2}  \hat f_{n}(1,\ell)
\end{align*}
we now get
\begin{equation*}
\begin{split}
\sum_{\ell=0}^n P(\| X_0^{(n)} \| = \ell) \cdot \Cov \left( f(X_0^{(n)}), f(X_\varepsilon^{(n)} ) \mid \| X_0^{(n)}\| = \ell \right)\hspace{-22em}&
\\&=
\E \left[ f_n(X_0^{(n)})f_n(X_{\varepsilon}^{(n)}) \right] -\sum_{\ell = 0}^n P(\| X_0^{(n)} \| = \ell) \cdot \E \left[ f_n (X_0^{(n)}) \mid \| X_0^{(n)} \| = \ell\right]^2  
\\&= \sum_{\ell=0}^n \sum_{i = 2}^{\binom{|V(G_n)|}{\ell}} e^{-\varepsilon \lambda_{i,\ell}^{(n)} } \hat f_n(i,\ell)^2 .
\end{split}
\end{equation*}
Note in particular that the term \( e^{-\varepsilon \lambda_{i,\ell}^{(n)} } \hat f_n(i,\ell)^2  \) in the previous equation is positive.  From this fact and~\eqref{equation: removing the variance}, it follows that \( (f_n)_{n \geq 1 } \) can be XS with respect to \( (G_n, \alpha_n)_{n \geq 1} \) if and only if
\begin{enumerate}
\item[(i)] \( \lim_{n \to \infty} \Var \left(  \E \left[ f(X_0^{(n)}) \mid   \| X_0^{(n)} \|  \right] \right) = 0 \), and
\item[(ii')] \( \lim_{n \to \infty} \sum_{\ell=0}^n \sum_{i = 2}^{\binom{|V(G_n)|}{\ell}} e^{-\varepsilon \lambda_{i,\ell}^{(n)} } \hat f_n(i,\ell)^2 = 0 \)
\end{enumerate}

For any \( \varepsilon > 0 \), it is easy to see that (ii) is satisfied if and only if (ii') holds. From this the desired conclusion follows.
\end{proof}

The following result provides an analogue of Proposition~\ref{proposition: spectral XS} for exclusion stability.

\begin{proposition}
Let \( (G_n)_{n \geq 1} \) be a sequence of finite connected graphs, let \( (\alpha_n)_{n \geq 1} \) be a sequence of positive real numbers and for each \( n \geq 1 \), let \( X^{(n)} \) be the exclusion process with respect to \( (G_n, \alpha_n)_{n \geq 1} \).  Further let \( \{ \psi_{i,\ell}^{(n)} \}_{i,\ell} \) be the orthonormal basis of eigenvectors defined in Lemma~\ref{lemma: general eigenvectors}, and let \( \{ \lambda_i^{(n,\ell)}\}_{i,\ell} \)  be the corresponding eigenvalues.
Then a sequence \( f_n \colon \{0,1\}^{V(G_n)} \to \{0,1 \}  \) is exclusion stable with respect to \( (G_n, \alpha_n )_{n \geq 1} \) if and only if for all \( \delta > 0 \) there is \( k \in \mathbb{N} \) such that

\begin{equation}
\sup_{n} \sum_{i, \ell \colon \lambda_{i,\ell}^{(n)} \geq k} \hat f_{n}(i,\ell)^2 < \delta.
\end{equation}
\label{proposition: spectral XStability}
\end{proposition}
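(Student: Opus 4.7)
The plan is to convert the expected squared difference directly into a spectral sum, exactly as in the proof of Proposition~\ref{proposition: spectral XS}. From the identity
\[
\E\bigl[f_n(X_0^{(n)}) f_n(X_t^{(n)})\bigr] = \sum_{i,\ell} e^{-t\lambda_{i,\ell}^{(n)}} \hat f_n(i,\ell)^2
\]
already derived there, together with Parseval's identity $\E[f_n(X_0^{(n)})^2] = \sum_{i,\ell} \hat f_n(i,\ell)^2$ (which also equals $\E[f_n(X_\varepsilon^{(n)})^2]$ by stationarity of $\pi_n$), one obtains the key formula
\[
\E\bigl[(f_n(X_0^{(n)}) - f_n(X_\varepsilon^{(n)}))^2\bigr] = 2 \sum_{i,\ell} \bigl(1 - e^{-\varepsilon \lambda_{i,\ell}^{(n)}}\bigr) \, \hat f_n(i,\ell)^2.
\]
Both directions of the proposition reduce to manipulating this sum via the elementary bounds $1 - e^{-\varepsilon\lambda} \leq \min(1, \varepsilon \lambda)$ from above and $1 - e^{-\varepsilon\lambda} \geq 1 - e^{-\varepsilon k}$ for $\lambda \geq k$ from below.

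For the direction that the spectral condition implies exclusion stability, I would fix $\delta > 0$, use the hypothesis to pick $k$ with $\sup_n \sum_{\lambda_{i,\ell}^{(n)} \geq k} \hat f_n(i,\ell)^2 < \delta$, and split the sum at the eigenvalue threshold $k$. On the high-frequency part, the trivial bound $1 - e^{-\varepsilon\lambda} \leq 1$ gives at most $2\delta$. On the low-frequency part, the estimate $1 - e^{-\varepsilon\lambda} \leq \varepsilon k$ combined with $\sum_{i,\ell} \hat f_n(i,\ell)^2 = \E[f_n(X_0^{(n)})^2] \leq 1$ (since $f_n$ is Boolean-valued) produces a bound of $2\varepsilon k$, which vanishes as $\varepsilon \to 0$. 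Thus $\limsup_n \E[(f_n(X_0^{(n)}) - f_n(X_\varepsilon^{(n)}))^2] \leq 2\delta$, and sending $\delta \to 0$ yields exclusion stability.

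For the converse, I would use the lower bound to obtain, for every $k > 0$,
\[
2\bigl(1 - e^{-\varepsilon k}\bigr) \sum_{i,\ell \colon \lambda_{i,\ell}^{(n)} \geq k} \hat f_n(i,\ell)^2 \leq \E\bigl[(f_n(X_0^{(n)}) - f_n(X_\varepsilon^{(n)}))^2\bigr].
\]
Given $\delta > 0$, exclusion stability supplies an $\varepsilon > 0$ such that the right-hand side has $\limsup_n$ less than $\delta/2$; choosing $k$ large enough that $1 - e^{-\varepsilon k} > 1/2$ then gives $\limsup_n \sum_{\lambda_{i,\ell}^{(n)} \geq k} \hat f_n(i,\ell)^2 < \delta/2$.

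The main obstacle is that the definition of exclusion stability only controls $\limsup_n$, whereas the spectral condition demands the uniform bound $\sup_n$. I plan to bridge this gap by exploiting that each $f_n$ lives on the finite set $\{0,1\}^{V(G_n)}$ and hence has only finitely many nonzero Fourier coefficients: for each fixed $n$, the tail $\sum_{\lambda_{i,\ell}^{(n)} \geq k} \hat f_n(i,\ell)^2$ is actually zero once $k$ exceeds the maximal eigenvalue of $-Q_n$. Consequently, only the finitely many $n$ exempted by the $\limsup$ estimate need extra attention, and a single further enlargement of $k$ absorbs all of them. This yields the required uniform bound and completes the equivalence.
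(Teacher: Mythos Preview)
Your proof is correct and follows essentially the same route as the paper: derive the spectral identity
\[
\E\bigl[(f_n(X_0^{(n)}) - f_n(X_\varepsilon^{(n)}))^2\bigr] = 2 \sum_{i,\ell} \bigl(1 - e^{-\varepsilon \lambda_{i,\ell}^{(n)}}\bigr)\hat f_n(i,\ell)^2,
\]
then split the sum at a threshold and use the obvious monotone bounds on \(1-e^{-\varepsilon\lambda}\). For the ``if'' direction your argument and the paper's are virtually identical; the paper uses the bound \(1-e^{-\varepsilon k}\) on the low-frequency part where you use \(\varepsilon k\), but this is cosmetic.

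For the ``only if'' direction there is a genuine, if minor, difference worth noting. The paper argues by contrapositive: assuming \(\sup_n \sum_{\lambda\geq k}\hat f_n(i,\ell)^2 \geq \delta\) for all \(k\), it sets \(k=\varepsilon^{-1}\) and bounds \(\sup_n P(f_n(X_0^{(n)})\neq f_n(X_\varepsilon^{(n)}))\) from below. Strictly speaking the definition of exclusion stability involves \(\limsup_n\) rather than \(\sup_n\), and the paper does not comment on this discrepancy. Your direct argument handles this point explicitly: you first obtain a bound on \(\limsup_n\) of the spectral tail, and then use the finiteness of each individual spectrum to upgrade this to a bound on \(\sup_n\) by enlarging \(k\) past the finitely many exceptional indices. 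This is a cleaner treatment of a detail the paper leaves implicit.
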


\begin{proof}
First note that since \( f_n \) is Boolean, we have that
\begin{equation*}
\begin{split}
\hspace{3em}&\hspace{-3em}P\left( f_n(X^{(n)}_{\varepsilon}) \not = f_n(X^{(n)}_0)\right)
\\&= 
\E \left[ f_n(X^{(n)}_{\varepsilon}) (1-f_n(X^{(n)}_0)) \right] 
\\&\hspace{2em}+ \E \left[ f_n(X^{(n)}_0) (1-f_n(X^{(n)}_{\varepsilon}))\right]
\\&= 2 \left( \E \left[ f_n(X^{(n)}_0) \right]  - \E \left[f_n(X^{(n)}_0)  f_n(X^{(n)}_{\varepsilon}) \right] \right) 
\\&= 2 \left( \E \left[ f_n(X^{(n)}_0) f_n(X^{(n)}_0)   \right]  - \E \left[f_n(X^{(n)}_0)  f_n(X^{(n)}_{\varepsilon}) \right] \right) .
\end{split}
\end{equation*}
Combining this with~\eqref{equation: expected value of product} for \( t = 0 \) and \( t = \varepsilon \), we obtain
\begin{equation*}
\begin{split}
P \left( f_n(X_0^{(n)}) \not = f_n(X_{\varepsilon}^{(n)}) \right)
&=
2 \left(\sum_{i,\ell} \hat f_n(i,\ell)^2  -  \sum_{i,\ell} e^{-\varepsilon \lambda_{i, \ell}^{(n)}} \hat f_n(i,\ell)^2\right)
\\&=
2 \sum_{i, \ell} \left(1-e^{-\varepsilon \lambda_{i,\ell}^{(n)}} \right) \hat f_n(i, \ell)^2.
\end{split}
\end{equation*}

For the \emph{if} direction of the proof, suppose that  for any \( \delta > 0 \) there  is \( k \geq 1\) such that
\begin{equation*}
\sup_{n} \sum_{i,\ell \colon \lambda_{i,\ell}^{(n)} \geq k } \hat f_{n}(i, \ell)^2 < \delta.
\end{equation*}
Then for all \( \delta > 0 \),
\begin{equation*}
\begin{split}
\lim_{\varepsilon \to 0 } \sup_n P\left( f_n(X^{(n)}_{\varepsilon}) \not = f_n(X^{(n)})\right)
&=
2\lim_{\varepsilon \to 0 } \sup_n  \sum_{i, \ell} \left(1-e^{-\varepsilon \lambda_{i, \ell}^{(n)}} \right) \hat f_n(i, \ell)^2
\\&\leq
2\delta + 2\lim_{\varepsilon \to 0 } \sup_n  \sum_{i, \ell \colon \lambda_{i, \ell}^{(n)} < k } \left(1-e^{-\varepsilon k } \right) \hat f_n(i, \ell)^2
\\&\leq
2\delta +2\lim_{\varepsilon \to 0 }  \left(1-e^{-\varepsilon k } \right) 
\\&= 2\delta.
\end{split}
\end{equation*}
As \( \delta \) can be chosen to be arbitrarily small, this implies that \( (f_n)_{n \geq 1} \) is exclusion stable with respect to \( (G_n, \alpha_n)_{n \geq 1 } \).

For the \emph{only if} direction, suppose that there is \( \delta > 0 \) such that for all \( k \geq 1\),
\begin{equation*}
\sup_{n} \sum_{i, \ell \colon \lambda_{i,\ell}^{(n)} \geq k} \hat f_{n}(i,\ell)^2 \geq \delta
\end{equation*}
for all \( k>0\). Then in particular, this is true for \( k = \varepsilon^{-1} \). This implies that
\begin{equation*}
\begin{split}
\lim_{\varepsilon \to 0 } \sup_{n} P \left( f_n(X_0^{(n)}) \not = f_n(X_{\varepsilon}^{(n)}) \right)
&=
2 \lim_{\varepsilon \to 0 } \sup_{n}    \sum_{i,\ell} \left(1-e^{-\varepsilon \lambda_{i, \ell}^{(n)}} \right) \hat f_n(i, \ell)^2
\\&\geq
2 \lim_{\varepsilon \to 0 } \sup_{n}    \sum_{i , \ell \colon \lambda_{i, \ell}^{(n)} \geq \varepsilon^{-1}} \left(1-e^{-\varepsilon \lambda_{i, \ell}^{(n)}} \right) \hat f_n(i, \ell)^2
\\&\geq
2 \lim_{\varepsilon \to 0 } \sup_{n}    \sum_{i , \ell \colon \lambda_{i,\ell}^{(n)} \geq \varepsilon^{-1}} \left(1-e^{-\varepsilon \varepsilon^{-1}} \right) \hat f_n(i, \ell)^2
\\&=
2 \lim_{\varepsilon \to 0 } (1-e^{-1}) \delta.
\end{split}
\end{equation*}
In particular, \( (f_n)_{n \geq 1} \) cannot be noise stable.
\end{proof}

Before ending this section, we present a last lemma which gives an upper bound of the eigenvalues \( \lambda_{i,\ell}^{(n)} \), which more or less follows directly by spelling out the terms in~\eqref{equationIV: rayleigh quotient} and then taking trivial upper bounds.

\begin{lemma}
Let \( Q^{(\ell)} \) be the generator of the symmetric exclusion process on \( G \) with \( \ell \) black marbles and rate \( \alpha \). If \( \lambda \) is an eigenvalue of \( -Q^{(\ell)}\) and \( d \coloneqq \max_{v \in V(G)} \deg v \), then \( \lambda \leq 2\alpha \ell d \).
\label{lemma: bounded eigenvalues}
\end{lemma}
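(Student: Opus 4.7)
The plan is to apply the Rayleigh quotient characterization~\eqref{equationIV: rayleigh quotient} directly to an eigenvector realizing~$\lambda$. Since any eigenvector $\psi$ of $-Q^{(\ell)}$ with eigenvalue $\lambda$ satisfies
\[
\lambda \;=\; \frac{\langle -Q^{(\ell)}\psi,\psi\rangle}{\langle \psi,\psi\rangle},
\]
it suffices to bound this ratio from above for a generic (nonzero) function $f\colon \binom{V(G)}{\ell}\to\mathbb{R}$ and then specialize to $f=\psi$.

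I would rewrite the numerator using~\eqref{equation: Rayleigh quotient as sum}, and cancel the common factor $\binom{|V(G)|}{\ell}^{-1}$ with the normalization in $\langle f,f\rangle = \binom{|V(G)|}{\ell}^{-1}\sum_x f(x)^2$. This reduces the claim to controlling
\[
\frac{\alpha}{2}\cdot \frac{\sum_{x\sim y}(f(x)-f(y))^2}{\sum_x f(x)^2}.
\]
Then I would apply the elementary inequality $(a-b)^2\le 2a^2+2b^2$ and exploit the symmetry of the relation~$\sim$ to rewrite the numerator as $4\sum_x f(x)^2\cdot|\{y:y\sim x\}|$ times an average, which upper-bounds it by
\[
4\sum_x f(x)^2\cdot \max_x |\{y:y\sim x\}|.
\]

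The only combinatorial input is therefore the bound $|\{y:y\sim x\}|\le \ell d$. Here I would note that $y\sim x$ corresponds to an edge $(u,v)\in E(G)$ with $x(u)\neq x(v)$, i.e.\ a cut edge between the set $x$ of black vertices and its complement; such edges number at most $\sum_{u\in x}\deg(u)\le \ell d$. Combining everything yields $\lambda\le 2\alpha\ell d$, as desired. No step is genuinely hard; the only mild care is to keep track of whether the sum $\sum_{x\sim y}$ is over ordered or unordered pairs (it is over ordered pairs, which is what makes the symmetrization in step three work cleanly).
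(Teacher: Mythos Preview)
Your proposal is correct and follows essentially the same route as the paper: Rayleigh quotient, the Dirichlet form identity~\eqref{equation: Rayleigh quotient as sum}, the inequality $(a-b)^2\le 2(a^2+b^2)$, symmetrization over ordered pairs, and the combinatorial bound $|\{y:y\sim x\}|\le \ell d$ via counting cut edges. The only cosmetic difference is that the paper phrases the bound as a supremum over all $g$ rather than evaluating at the eigenvector; your remark about keeping track of ordered versus unordered pairs is exactly the point where care is needed, and you handle it correctly.
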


\begin{proof}
First recall the characterization of an eigenvalue \( \lambda_i^{(n,\ell)} \) given by the Rayleigh quotient, namely that for any \( i \),
\[
\lambda_i^{(n, \ell)} \leq \sup_{g \not \equiv 0} \frac{\langle -Q_n^{(\ell)}g,g \rangle}{\langle g,g \rangle}.
\]
Equivalently, for any such eigenvalue,
\[
\lambda_i^{(n, \ell)} \leq \sup_g \frac{\sum_x \pi_n^{(\ell)} (x) g(x) \sum_{y \sim x} \alpha \cdot  (g(x)-g(y)) }{\sum_x \pi_{n}^{(\ell)}(x) g(x)^2} .
\]
Using that \( \pi_n^{(\ell)} \) is uniform, and simplifying, we obtain
\[
\lambda_i^{(n, \ell)} \leq \sup_g \frac{\alpha \sum_{x,y \colon y\sim x}  (g(x)-g(y))^2 }{2 \sum_x g(x)^2} .
\]
As
\[
\sum_{x,y \colon y \sim x} (g(x)-g(y))^2 \leq 2\sum_{x,y \colon y \sim x} \left( g(x)^2 + g(y)^2 \right)
\]
and as for any state \(s \) there is at most \( \ell  d \)  states \( y \) such that \( y \sim x \), we obtain
\[
\lambda_i^{(n, \ell)} \leq \sup_g \frac{4\alpha \sum_{x}   \ell d \, g(x)^2 }{2 \sum_x g(x)^2}  = 2\alpha \ell d.
\]
\end{proof}

\section{Eigenvectors and eigenvalues for symmetric exclusion processes}

Below and in the rest of this section, for any graph \( G \), any \( \ell \in \{ 0,1,\ldots, |V(G)| \} \), any \( x \in \binom{V(G)}{\ell} \) and any \( v \in V(G)\), let \( x_v \) denote the unique element in \( \binom{V(G)}{\ell -1 } \) or \( \binom{V(G)}{\ell+1} \) which differs from \( x \) in only the color at vertex \( v \). Moreover, for any \( e \in E(G) \), let \( x_e \) be the unique element in \( \binom{V(G)}{\ell} \) which is obtained by switching positions of the marbles at the endpoints of \( e \). For any \( v \in V(G) \), let 
\[
x(v) \coloneqq
\begin{cases}
1 & \textnormal{ if the marble at \( v \in V(G)\) is black} \cr
0 & \textnormal{ if the marble at \( v \in V(G)\) is white} 
\end{cases}
\]
and recall that 
\[
\| x \| \coloneqq \left|\{ v \in V(G) \colon x(v)=1 \}\right|.
\]
Finally, for \( m<\ell \) and \( y \in \binom{V(G)}{m} \), let \( y \leq x \) denote that for all \( v \in V(G) \) we have that \( y(v) \leq x(v) \).

Our main objective in this section will be to give the relationships between the eigenvectors \( \{ \psi_i^{(n,\ell)} \}_i\)  and eigenvalues  \( \{ \lambda_i^{(n,\ell)} \}_i\) for different \( \ell \in \{ 1,2, \ldots , \lfloor n/2 \rfloor \} \). This will done by studying the following operators, defined for any eigenvector \( \psi \) of \( Q_n^{(\ell) } \) by
\[
\psi_+ (x) \coloneqq \sum_{v \colon x(v)=0} \psi(x_v), \qquad  x \in \textstyle   \binom{V(G_n)}{\ell-1} 
\]
and
\[
\psi_- (x) \coloneqq \sum_{v \colon x(v)=1} \psi(x_v), \qquad  x \in \textstyle \binom{V(G_n)}{\ell+1} .
\]

The following result will play a major role in the later proof of our main result, Theorem~\ref{theorem:main result}. 

\begin{proposition}
For each \( \ell=1,2,\ldots, \lfloor n/2 \rfloor \), let \( Q_n^{(\ell)} \) be the generator of the symmetric exclusion process with respect to \( (K_n, \alpha, \ell) \). Also, for each such \( \ell \), let \( \{ \psi_i^{(n,\ell)} \}_i \) be an orthonormal set of eigenvectors of  \( -Q_n^{(\ell)} \) and let \( \{ \lambda_i^{(n,\ell)}\}_{i} \) be the corresponding eigenvalues.
Then
\begin{enumerate}
\item[(a)] for each \( j = 1, \ldots, \ell \), the eigenvalue \( \alpha j(n-j+1) \) has multiplicity \
\[ \binom{n}{j} - \binom{n}{j-1}, \]
\item[(b)] If we order the eigenvalues of \( -Q_n^{(\ell)} \) so that 
\[
 0 = \lambda_1^{(n,\ell)} < \lambda_2^{(n,\ell)} \leq \lambda_3^{(n,\ell)} \leq \cdots \leq \lambda_{\binom{n}{\ell}}^{(n,\ell)},
\]
 an orthogonal basis of eigenvectors to \( -Q_n^{(\ell-1)} \) is given by 
\[
 \{  (\psi_i^{(n,\ell)} )_+ \}_{i \in \{ 1,2,\ldots,  \binom{n}{\ell-1} \}}, 
\]
\item[(c)] given the ordering of the eigenvectors given in (b), for \( i \in \{ 1,2,\ldots,  \binom{n}{\ell-1} \} \), we can pick \( \psi_i^{(n,\ell)} = C_{i}( \psi_i^{(n,\ell-1)})_- ,\) for some normalizing constants \( C_{i}>0 \) that depends on \( i \), \( n \), \( \ell \) and \( \alpha \).
\item[(d)] given the ordering of the eigenvectors given in (b), for any \( m < \ell\) we can pick the eigenvectors \( \{ \psi_i^{(n,\ell)} \} \)  of \( -Q_n^{(\ell)} \) with eigenvalue less than or equal to \( \alpha m(n-m+1) \) in such a way that any such eigenvector \( \psi_i^{(n,\ell)} \) can  be written as
\[
\psi_i^{(n,\ell)} (\cdot) = C_i' \sum_{y \leq \cdot \colon \|y\|=m} \psi_i^{(n,m)}(y).
\] 
for normalizing constants \( C_i' > 0 \) that depends on \( i\), \( n \), \( \ell \), \( \alpha \) and \( m \).
Moreover, we have   \( \lambda_i^{(n,\ell)} =  \lambda_i^{(n,m)}.\)
\end{enumerate}
\label{proposition: eigenvectors}
\end{proposition}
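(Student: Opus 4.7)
The plan is to establish parts (a)--(c) simultaneously by induction on $\ell$, and then to derive (d) by iterating the basis change in (c). The whole argument turns on two algebraic properties of the raising and lowering operators $\psi \mapsto \psi_+$ and $\psi \mapsto \psi_-$, together with the exceptional symmetry of the complete graph $K_n$.

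The first step I would take is to prove the commutation identities
\begin{equation*}
-Q_n^{(\ell-1)}(\psi_+) = \bigl(-Q_n^{(\ell)}\psi\bigr)_+,
\qquad
-Q_n^{(\ell+1)}(\psi_-) = \bigl(-Q_n^{(\ell)}\psi\bigr)_-,
\end{equation*}
valid for any function $\psi$ on $\binom{V(K_n)}{\ell}$. These come out by expanding both sides, reordering the resulting double sum, and identifying a residual cross term of the form $\sum_{v,v' \in W,\ v \neq v'}[\psi(x_v) - \psi(x_{v'})]$, where $W$ is the set of white vertices of $x$. This residual telescopes to zero because, in $K_n$, every pair of white vertices is joined by an edge, making the sum antisymmetric in $(v,v')$. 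On a proper subgraph this cancellation would fail, which is precisely why the proposition is stated only for $K_n$. As an immediate corollary, if $\psi$ is an eigenvector of $-Q_n^{(\ell)}$ with eigenvalue $\lambda$, then $\psi_+$ and $\psi_-$ are either zero or eigenvectors of $-Q_n^{(\ell-1)}$ and $-Q_n^{(\ell+1)}$ respectively, with the same eigenvalue $\lambda$.

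Next I would establish the composition formulas
\begin{equation*}
(\psi_+)_- = \bigl[\ell(n-\ell+1) - \lambda/\alpha\bigr]\,\psi,
\qquad
(\psi_-)_+ = \bigl[(n-\ell)(\ell+1) - \lambda/\alpha\bigr]\,\psi,
\end{equation*}
for $\psi$ an eigenvector of $-Q_n^{(\ell)}$ with eigenvalue $\lambda$. These are obtained by expanding the double sum defining each composition and using $-Q_n^{(\ell)}\psi = \lambda\psi$ to rewrite the cross terms. In parallel I would record the adjointness
\begin{equation*}
\langle \psi_+, \phi \rangle_{\pi_n^{(\ell-1)}} = \frac{n-\ell+1}{\ell}\,\langle \psi, \phi_- \rangle_{\pi_n^{(\ell)}},
\end{equation*}
which is immediate from a re-indexing of the pair $(x,v)$ with $x\in\binom{V(K_n)}{\ell-1}$ and $v\notin x$, and which implies that the image of the lowering operator and the kernel of the raising operator are orthogonal complements.

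The induction on $\ell$ now falls into place. The base $\ell=0$ is trivial. For the step, the inductive hypothesis gives the upper bound $\mu \leq \alpha(\ell-1)(n-\ell+2)$ on eigenvalues of $-Q_n^{(\ell-1)}$, and a short algebraic computation shows this is strictly less than $\alpha\ell(n-\ell+1)$ whenever $\ell \leq n/2$. Hence the scalar $\ell(n-\ell+1) - \mu/\alpha$ is positive for every such $\mu$, and the composition identity forces $\phi \mapsto \phi_-$ to be injective on eigenvectors of $-Q_n^{(\ell-1)}$. Its image is therefore a $\binom{n}{\ell-1}$-dimensional subspace of eigenfunctions of $-Q_n^{(\ell)}$, carrying exactly the eigenvalues $\alpha j(n-j+1)$ for $j=0,\ldots,\ell-1$ with the multiplicities provided by induction. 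By the adjoint relation, the orthogonal complement of this image coincides with the kernel of $\psi\mapsto\psi_+$, and the composition formula for $(\psi_+)_-$ forces every $\psi$ in this kernel to satisfy $\lambda=\alpha\ell(n-\ell+1)$. The dimension count $\binom{n}{\ell}-\binom{n}{\ell-1}$ for the complement then delivers (a), and (b)--(c) are the basis-level restatement of the same decomposition. Item (d) follows by iterating (c): an easy induction shows that the $(\ell-m)$-fold composition of $\psi\mapsto\psi_-$ applied to an eigenvector $\psi$ on $\binom{V(K_n)}{m}$ equals $(\ell-m)!\sum_{y\leq\cdot,\ \|y\|=m}\psi(y)$, so after normalisation one recovers the stated formula. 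The main obstacle in the whole argument is identifying and carefully justifying the antisymmetric cancellation in the commutation identity; once the completeness of $K_n$ has been exploited there, everything else is essentially bookkeeping and dimension counting.
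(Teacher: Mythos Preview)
Your argument is correct and follows essentially the same inductive scheme as the paper, with the raising/lowering operators $\psi\mapsto\psi_+$ and $\psi\mapsto\psi_-$ doing all the work. The paper packages the key $K_n$-specific step slightly differently, computing the norms $\langle\psi_\pm,\psi_\pm\rangle$ directly rather than the compositions $(\psi_+)_-$ and $(\psi_-)_+$; combined with your adjointness relation, the two formulations are equivalent.

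One conceptual point is worth correcting. You locate the use of the complete-graph hypothesis in the commutation identity $-Q_n^{(\ell-1)}(\psi_+)=(-Q_n^{(\ell)}\psi)_+$, but in fact this identity holds on \emph{any} finite connected graph: the paper proves it as Lemma~\ref{lemma: eigenvectors} by the bijection $(x,e,v)\leftrightarrow(x,v,e)$ between the index sets of the two double sums, with no residual term at all. The place where $K_n$ is genuinely required is your \emph{second} step, the composition formula $(\psi_+)_-=[\ell(n-\ell+1)-\lambda/\alpha]\psi$: expanding the double sum there produces $\sum_{v:x(v)=1}\sum_{w:x(w)=0}\psi(x_{v\leftrightarrow w})$, and it is only on $K_n$ that every such pair $(v,w)$ is an edge, allowing this to be rewritten in terms of $-Q_n^{(\ell)}\psi$. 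So the ``main obstacle'' you identify is real, but it sits one step later in the argument than you place it.
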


\begin{remark}\label{remark: symmetry}
Proposition~\ref{proposition: eigenvectors} is stated only for \( \ell \in \{ 1,2, \ldots , \lfloor n/2 \rfloor \} \), that is for \( \ell \in   \{ 1,2, \ldots ,\lfloor n/2 \rfloor \}  \) black marbles and \( n-\ell \in \{ \lfloor n/2 \rfloor +1, \ldots, n-1 \} \) white marbles. However, as an exclusion process with \( \ell \) black marbles and \( n - \ell \) white marbles essentially behaves in the same way as a exclusion process with \( n-\ell \) black marbles and \( \ell \) white marbles, we have
\[
\psi_i^{(n,\ell)}(x) = \psi_i^{(n,n-\ell)}(1-x)
\]
and
\[
\lambda_i^{(n,\ell)} = \psi_i^{(n,n-\ell)}(1-x).
\]
\end{remark}

The merit of Proposition~\ref{proposition: eigenvectors} is not that it provides new information about the eigenvalues and eigenvectors of the generator of the exclusion process on a complete graph; in fact neither the eigenvectors nor the eigenvalues of this process are unknown (see e.g.~\cite{bgs2013, f22014, bh2012}). Rather, this result is important to us because it provides a quite explicit structure of the eigenvectors, and it is this structure that we will need in the proof of Theorem~\ref{theorem:main result}. In particular, we will need this structural results for exclusion processes on general graphs, which is provided by the next lemma.

For eigenvectors \( \psi \) of \( Q_n^{(1) } \) and \( x \in \binom{V(G_n)}{\ell} \), a function similar to \( \psi_+ \) and \( \psi_- \), defined by
\[
\psi_*(x) \coloneqq \sum_{v \colon x(v)=1} \psi((0,0,\ldots, 0)_v),
\]
was used in~\cite{clr2010} to find bounds on the smallest nonzero eigenvalue of \( -Q^{(\ell)} \) for general graphs. In this paper the authors showed that \( \psi_* \) will be an eigenvector of \( -Q^{(\ell)} \), and thus deduced parts of Proposition~\ref{proposition: eigenvectors} and the lemmas that we will use to prove it. However, they did not extend this definition to eigenvectors eigenvectors \( \psi \) of \( Q_n^{(j) } \) for \( j \geq 1 \). The next lemma therefore extends their result, and provides a way to obtain eigenvectors of \( -Q^{(\ell-1)} \) and \( -Q^{(\ell +1 ) } \) given eigenvectors of \( -Q^{(\ell)}\).

\begin{lemma}
Let \( G \) be a finite connected graph, \( \ell \in \{ 0,1,\ldots, |V(G)|-1 \} \) and let \( \alpha \)  any strictly positive real number. Let \( Q^{(\ell)} \) be the generator of the symmetric exclusion process with respect to  \(  (G, \alpha, \ell)\), and let \( \psi \) be an eigenvector of \( -Q^{(\ell)}\) with eigenvector \( \lambda \). Then, for \( x \in \binom{V(G)}{\ell-1} \), the function \( \psi_+ \colon \binom{V(G)}{\ell-1} \to \{0,1 \}  \) is either an eigenvector to \( -Q^{(\ell-1)} \) with eigenvalue \( \lambda \), or \( \psi_+ \equiv 0 \). Similarly, for \( x \in \binom{V(G)}{\ell+1}\), the function \( \psi_- \colon \binom{V(G)}{\ell+1} \to \{0,1 \}  \) 
is either an eigenvector to \( -Q^{(\ell+1)} \)  with eigenvalue \( \lambda \), or \( \psi_- \equiv 0 \). 
\label{lemma: eigenvectors}
\end{lemma}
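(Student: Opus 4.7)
The approach is to verify the eigenvalue equation $-Q^{(\ell-1)}\psi_+ = \lambda \psi_+$ directly, and then observe that the statement for $\psi_-$ follows by the mirror argument obtained from the symmetry between black and white marbles. To set things up, I write the generator in its pointwise form
\[
(-Q^{(\ell)}\phi)(z) = \alpha \sum_{e \in E(G)} \bigl[\phi(z) - \phi(z_e)\bigr],
\]
with the convention that $z_e = z$ whenever the endpoints of $e$ carry the same color in $z$, so trivial transitions silently contribute zero. Fix $x \in \binom{V(G)}{\ell-1}$. Applying the eigenvector equation pointwise at each $z = x_v$ with $x(v)=0$ and summing over such $v$, I obtain
\[
\lambda\,\psi_+(x) \;=\; \alpha \sum_{e \in E(G)} \Bigl[\psi_+(x) - \sum_{v:\,x(v)=0} \psi((x_v)_e)\Bigr].
\]
On the other hand, unfolding the definition of $\psi_+$ at the neighbor $x_e$ of $x$ gives
\[
(-Q^{(\ell-1)}\psi_+)(x) \;=\; \alpha \sum_{e \in E(G)} \Bigl[\psi_+(x) - \sum_{v:\,x_e(v)=0}\psi((x_e)_v)\Bigr].
\]
Thus the lemma for $\psi_+$ reduces to the per-edge combinatorial identity
\[
\sum_{v:\,x(v)=0} \psi((x_v)_e) \;=\; \sum_{v:\,x_e(v)=0}\psi((x_e)_v), \qquad e \in E(G).
\]

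The heart of the proof, and what I expect to be the main obstacle, is this per-edge identity. Its content is that the two operations ``add a black marble at a currently white vertex'' and ``swap the marbles along the edge $e$'' commute up to a relabeling of the summation index $v$. I would prove it by a short case analysis on the colors of the endpoints $\{a,b\}$ of $e$ in $x$. If $x(a) = x(b)$ then $x_e = x$ and the two sums coincide term by term. The essential case is $x(a)=1$, $x(b)=0$. Here the only subtlety is a pair of ``degenerate'' terms on opposite sides: on the left the summand $v = b$ has $x_b(a) = x_b(b) = 1$, forcing the swap to be trivial, so that $(x_b)_e = x_b$ contributes $\psi(x_b)$; on the right, the newly-white vertex $v = a$ of $x_e$ produces $(x_e)_a$, which has exactly the same set of black vertices as $x_b$, and so contributes the same value $\psi(x_b)$. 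For every remaining $v$ with $x(v) = 0$ and $v \neq b$, a direct inspection shows that $(x_v)_e$ and $(x_e)_v$ are literally the same configuration in $\binom{V(G)}{\ell}$, so the remaining terms pair up one-to-one.

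Once this identity is in hand, the equality $(-Q^{(\ell-1)}\psi_+)(x) = \lambda\psi_+(x)$ holds at every $x \in \binom{V(G)}{\ell-1}$, which forces $\psi_+$ to be either identically zero or an eigenvector of $-Q^{(\ell-1)}$ with eigenvalue $\lambda$, as claimed. The statement for $\psi_-$ is obtained by running exactly the same argument with the roles of $0$ and $1$ interchanged: summing the eigenvector equation at $z = x_v$ over $v$ with $x(v) = 1$ for a fixed $x \in \binom{V(G)}{\ell+1}$ reduces the claim to the mirror identity $\sum_{v:\,x(v) = 1}\psi((x_v)_e) = \sum_{v:\,x_e(v) = 1}\psi((x_e)_v)$, which is verified by the same edge-coloring case analysis.
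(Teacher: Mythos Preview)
Your proposal is correct and follows essentially the same approach as the paper: both reduce the claim to the combinatorial identity $\sum_{v:\,x(v)=0}\psi((x_v)_e)=\sum_{v:\,x_e(v)=0}\psi((x_e)_v)$ and then interchange the sum over $v$ with the action of the generator. The only difference is that the paper asserts this identity without justification, whereas you supply the explicit edge-coloring case analysis; your treatment is thus a more detailed version of the same argument.
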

Note that by applying the operator \( \psi \mapsto \psi_+ \) \( \ell - m \) times and then using Lemma~\ref{lemma: eigenvectors}, we get the following corollary, which is a weaker version of Proposition~\ref{proposition: eigenvectors} (d) for finite connected graphs.

\begin{corollary}
In the setting of Lemma~\ref{lemma: eigenvectors}, if \( \psi_i^{(m)} \) is an  eigenvector of \( -Q^{(m)} \) with corresponding eigenvalue \( \lambda_i^{(m)} \), then either
\[
\psi(x) \coloneqq \sum_{y \leq x \colon \|y\|=m} \psi_i^{(m)}(y), \qquad x \in \textstyle \binom{V(G)}{\ell}
\]
is a (nonzero) eigenvector of \( -Q_n^{(\ell)} \) with eigenvalue \( \lambda_i^{(\ell)} =  \lambda_i^{(m)}\), or \( \psi \equiv 0 \).

\label{corollary for lemma 4.1}
\end{corollary}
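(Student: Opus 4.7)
The plan is to iterate the raising operator $\psi \mapsto \psi_-$ from Lemma~\ref{lemma: eigenvectors} a total of $\ell - m$ times, starting from $\psi_i^{(m)}$, and then check that the resulting function is, up to a positive scalar, equal to the function $\psi$ given in the statement. To this end, let $\varphi^{[k]}$ denote the result of applying $\psi \mapsto \psi_-$ to a function $\varphi$ on $\binom{V(G)}{m}$ a total of $k$ times, so $\varphi^{[0]} = \varphi$ and $\varphi^{[k+1]}(x) = \sum_{v\colon x(v)=1} \varphi^{[k]}(x_v)$ for $x \in \binom{V(G)}{m+k+1}$.

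The first step is to establish by induction on $k \geq 0$ the combinatorial identity
\[
\varphi^{[k]}(x) \;=\; k! \sum_{\substack{y \leq x \\ \|y\| = m}} \varphi(y), \qquad x \in \binom{V(G)}{m+k}.
\]
The base case $k=0$ is immediate. For the step from $k$ to $k+1$, I expand $\varphi^{[k+1]}(x) = \sum_{v \colon x(v)=1} \varphi^{[k]}(x_v)$, substitute the inductive hypothesis into each inner term, and swap the order of summation. Each $y \leq x$ with $\|y\| = m$ is then counted once for each $v \in x \setminus y$, and since $|x \setminus y| = k+1$, the coefficient in front of $\varphi(y)$ becomes $(k+1)\cdot k! = (k+1)!$, as required.

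Applying the identity with $k = \ell - m$ and $\varphi = \psi_i^{(m)}$ shows that the function $\psi$ in the corollary equals $\tfrac{1}{(\ell - m)!}\cdot(\psi_i^{(m)})^{[\ell - m]}$. By Lemma~\ref{lemma: eigenvectors}, each application of $\psi \mapsto \psi_-$ sends an eigenvector of $-Q^{(j)}$ with eigenvalue $\mu$ to either an eigenvector of $-Q^{(j+1)}$ with the same eigenvalue $\mu$, or to the zero function (which is trivially preserved under further iteration). A routine induction on $k$ then gives that $(\psi_i^{(m)})^{[k]}$ is either a nonzero eigenvector of $-Q^{(m+k)}$ with eigenvalue $\lambda_i^{(m)}$, or identically zero. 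Taking $k = \ell - m$ yields the stated dichotomy with $\lambda_i^{(\ell)} = \lambda_i^{(m)}$.

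The only real work here is the combinatorial bookkeeping needed to verify the factor $k!$; once this is in place, Lemma~\ref{lemma: eigenvectors} closes the argument, and I do not foresee any substantive obstacle.
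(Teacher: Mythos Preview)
Your proof is correct and follows essentially the same approach the paper sketches: iterate the operator from Lemma~\ref{lemma: eigenvectors} a total of $\ell-m$ times and identify the result with the sum in the statement. Note that the paper's one-line remark before the corollary says to apply $\psi\mapsto\psi_+$, but since $\psi_+$ lowers the level while the corollary goes from level $m$ up to level $\ell$, the operator actually needed is $\psi_-$, exactly as you use; your explicit computation of the factor $(\ell-m)!$ is a detail the paper suppresses.
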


We now  prove Lemma~\ref{lemma: eigenvectors} mainly by spelling out the definitions of \( \psi_+ \) and \( \psi_- \).

\begin{proof}
Note first that for any \( x \in \{ 0,1 \}^{V(G)} \),
\begin{equation*}
\begin{split}
\sum_{e \in E(G)}  \psi_+(x_{e})
&=
\sum_{e \in E(G)}   \sum_{v\in V(G) \colon \atop  x_e(v)=0} \psi((x_{e})_v)
\\&=
\sum_{e \in E(G_)}   \sum_{v\in V(G) \colon \atop  x(v)=0} \psi((x_v)_e).
\end{split}
\end{equation*}

Using this, for \( x \in \binom{V(G)}{\ell-1}  \), we obtain
\begin{equation*}
\begin{split}
-Q^{(\ell-1)}  \psi_+ (x)  
&=  
\alpha \!\!\!\sum_{e \in E(G)} \! (\psi_+(x) - \psi_+(x_{e}))
\\&=
\alpha \!\!\!  \sum_{e \in E(G)} \sum_{v\in V(G) \colon \atop x(v)=0} \! ( \psi(x_v) - \psi((x_v)_{e})
\\&=
\sum_{v\in V(G) \colon \atop x(v)=0} \alpha \!\!\!  \sum_{e \in E(G)} \!  ( \psi(x_v) - \psi((x_v)_{e})
\\&=
 \sum_{v\in V(G) \colon \atop x(v)=0} -Q^{(\ell)}\psi(x_v) 
\\&=
 \sum_{v\in V(G) \colon \atop x(v)=0} \lambda \psi(x_v) 
=
\lambda \psi_+(x).
\end{split}
\end{equation*}
This shows that \( \psi_+ \) is an eigenvector to \( -Q^{(\ell-1)} \) with eigenvalue \( \lambda \), provided that \( \psi_+ \not \equiv 0 \).
Analogously, for \( x \in \binom{V(G)}{\ell+1} \), we have
\begin{equation*}
\sum_{e \in E(G)}  \psi_-(x_{e})
=
\sum_{e \in E(G)}   \sum_{v\in V(G) \colon \atop x_{e}(v)=1} \psi((x_{e})_v)
=
\sum_{e \in E(G)}   \sum_{v\in V(G) \colon \atop  x(v)=1} \psi((x_v)_{e})
\end{equation*}
in turn implying that
\begin{equation*}
\begin{split}
-Q^{(\ell+1)}  \psi_- (x)  
&=  
\alpha  \!\!\!   \sum_{e \in E(G)} (\psi_-(x) - \psi_-(x_{e}))
\\&=
 \alpha \!\!\!  \sum_{e \in E(G)} \sum_{v \colon x(v)=1} ( \psi(x_v) - \psi((x_v)_{e})
\\&=
\sum_{v\in V(G) \colon \atop x(v)=1} -Q^{(\ell)}\psi(x_v) 
\\&=
\sum_{v\in V(G) \colon \atop x(v)=1} \lambda \psi(x_v) 
=
\lambda \psi_-(x).
\end{split}
\end{equation*}
As this shows that \( \psi_- \) is an eigenvector to \( -Q^{(\ell+1)} \) with eigenvalue \( \lambda \) provided that \( \psi_- \not \equiv 0 \), this concludes the proof.
\end{proof}

The purpose of the next lemma is to provide expressions for the lengths of \( \psi_+ \) and \( \psi_-\). In contrast to the previous lemma, this lemma requires that the graph \( G_n \) on which the exclusion process evolves is the complete graph.

\begin{lemma}
Let \( \alpha \) be a positive real number and let \( Q_n^{(\ell)} \) be the generator of the symmetric exclusion process with respect to \( (K_n, \alpha) \). Then for any eigenvector \( \psi \) of \( -Q_n^{(\ell)} \) with corresponding eigenvalue \( \lambda \), 
\[
 \langle \psi_+, \psi_+ \rangle =  \frac{n-\ell+1}{\alpha \ell} \cdot  \left( \alpha\ell(n-\ell+1)  - \lambda\right) 
 \]
 and 
\[
 \langle \psi_-, \psi_- \rangle = \frac{\ell+1}{\alpha(n-\ell)} \cdot  \left(  \alpha(\ell+1)(n-\ell)  - \lambda\right).
 \]
 \label{lemma: length}
\end{lemma}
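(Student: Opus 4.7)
The plan is to exploit the fact that, on the complete graph, the operations $\psi \mapsto \psi_+$ and $\phi \mapsto \phi_-$ are essentially adjoint to each other (up to the size-ratio of the state spaces), and that their composition applied to an eigenvector yields a scalar multiple of the eigenvector.

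First I would establish the adjoint identity: for any $\psi \colon \binom{V(K_n)}{\ell} \to \mathbb{R}$ and $\phi \colon \binom{V(K_n)}{\ell-1} \to \mathbb{R}$, a change of variable in the double sum $\sum_{x \in \binom{V}{\ell-1}} \phi(x) \sum_{v \notin x} \psi(x \cup \{v\})$ (setting $y = x \cup \{v\}$) gives
\[
\langle \psi_+, \phi \rangle_{\pi_n^{(\ell-1)}} = \frac{n-\ell+1}{\ell}\, \langle \psi, \phi_- \rangle_{\pi_n^{(\ell)}},
\]
where the factor comes from $\binom{n}{\ell}/\binom{n}{\ell-1}$. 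The mirror identity $\langle \psi_-, \phi \rangle_{\pi_n^{(\ell+1)}} = \tfrac{\ell+1}{n-\ell}\,\langle \psi, \phi_+ \rangle_{\pi_n^{(\ell)}}$ follows symmetrically.

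Next I would compute the composition $(\psi_+)_- \colon \binom{V(K_n)}{\ell} \to \mathbb{R}$. Spelling out the definitions,
\[
(\psi_+)_-(y) = \sum_{v \in y}\,\sum_{w \notin y \setminus \{v\}} \psi\bigl((y \setminus \{v\}) \cup \{w\}\bigr).
\]
Splitting off the $w = v$ diagonal gives $\ell\,\psi(y)$, while the off-diagonal terms $w \notin y$ collect exactly the sum over all single-swap neighbors of $y$ in $K_n$. Using the identity
\[
-Q_n^{(\ell)}\psi(y) = \alpha \sum_{v \in y}\sum_{w \notin y}\bigl(\psi(y) - \psi((y\setminus\{v\})\cup\{w\})\bigr) = \lambda \psi(y),
\]
I can solve for $\sum_{v \in y, w \notin y} \psi((y\setminus\{v\})\cup\{w\}) = \bigl(\ell(n-\ell) - \lambda/\alpha\bigr)\psi(y)$, and conclude
\[
(\psi_+)_- = \Bigl(\ell(n-\ell+1) - \tfrac{\lambda}{\alpha}\Bigr)\psi.
\]
An entirely parallel computation yields $(\psi_-)_+ = \bigl((\ell+1)(n-\ell) - \lambda/\alpha\bigr)\psi$.

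Finally, I would combine the two ingredients. Taking $\phi = \psi_+$ in the adjoint identity gives
\[
\langle \psi_+, \psi_+\rangle = \tfrac{n-\ell+1}{\ell}\,\langle \psi, (\psi_+)_-\rangle = \tfrac{n-\ell+1}{\ell}\Bigl(\ell(n-\ell+1) - \tfrac{\lambda}{\alpha}\Bigr)\langle\psi,\psi\rangle,
\]
which (for a unit eigenvector, as is implicit from the orthonormal basis in Proposition~\ref{proposition: eigenvectors}) simplifies to the claimed expression $\tfrac{n-\ell+1}{\alpha\ell}(\alpha\ell(n-\ell+1)-\lambda)$. The formula for $\langle \psi_-,\psi_-\rangle$ follows identically from the mirror identities. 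I expect the only real pitfall to be bookkeeping of the diagonal $w=v$ term in $(\psi_+)_-$ and the counting factor $\binom{n}{\ell}/\binom{n}{\ell-1}$; the fact that the graph is complete is crucial because it is precisely what guarantees that $\sum_{v \in y, w \notin y}$ ranges over \emph{every} pair with one element in $y$ and one outside, so that the composition $(\psi_+)_-$ is diagonalized by $\psi$.
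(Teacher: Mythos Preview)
Your proof is correct and, at its core, performs the same combinatorial count as the paper: both arguments split the double sum into a diagonal term contributing $\ell\,\psi(y)$ and an off-diagonal term equal to the sum of $\psi$ over all exclusion neighbors of $y$, then invoke the eigenvalue relation on $K_n$ to collapse the neighbor sum. The difference is purely organizational. The paper expands $\langle \psi_+, \psi_+'\rangle$ directly as a sum over pairs $(v,v')$ with $x(v)=x(v')=0$, re-indexes by $y = x_v$, and reads off the answer. You instead factor the computation into two reusable pieces: the adjoint identity $\langle \psi_+,\phi\rangle = \tfrac{n-\ell+1}{\ell}\langle \psi,\phi_-\rangle$ and the pointwise formula $(\psi_+)_- = \bigl(\ell(n-\ell+1)-\lambda/\alpha\bigr)\psi$. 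Your packaging is a bit more conceptual (it makes explicit that $+$ and $-$ are scaled adjoints, which immediately gives Lemma~\ref{lemma: orthogonality} as well), while the paper's direct expansion has the advantage of proving the more general bilinear formula $\langle\psi_+,\psi_+'\rangle = \tfrac{n-\ell+1}{\alpha\ell}(\alpha\ell(n-\ell+1)-\lambda)\langle\psi,\psi'\rangle$ in one stroke. Your remark that the stated formula presumes $\langle\psi,\psi\rangle=1$ is accurate; the paper's proof likewise yields the result up to a factor $\langle\psi,\psi\rangle$ and tacitly uses the orthonormal normalization.
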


\begin{proof}
Let \( \psi \) and \( \psi' \) be any two eigenvectors of \( -Q_n^{(\ell)} \). Then by definition,
\begin{equation*}
\begin{split}
\langle  \psi_+ , \psi_+'   \rangle 
&=
\frac{1}{\binom{n}{\ell-1}} \cdot \sum_{x \in \binom{V(K_n)}{\ell-1}}  \psi_+ (x) \psi_+'(x)
\\&=
\frac{1}{\binom{n}{\ell-1}} \cdot \sum_{x \in \binom{V(K_n)}{\ell-1}}     \sum_{v \colon x(v)=0\atop v' \colon x(v')=0} \psi(x_v)   \psi'(x_{v'}).
\end{split}
\end{equation*}
Now for each \( y \in \binom{V(K_n}{\ell} \), in the double sum above, the term \( \psi(y)\psi'(y) \) will be counted \( \ell \) times, as for each \( v  \) such that \( y(v) = 1 \), we can let \( v = v' \), \( x = y_v \) and write \( \psi(y)\psi'(y) \)  as \( \psi(x_{v}) \psi(x_{v'}) \). Similarly, for any \( y,y' \in \binom{V(K_n}{\ell} \) such that \( y \sim y' \), the term \( \psi(y)\psi(y' ) \) will appear exactly one time, as this requires \( x \) to be the configuration with black marbles at the positions where both \( y \) and \( y' \) have black marbles. This implies that
\begin{equation}
\begin{split}
\hspace{3em}&\hspace{-3em}\sum_{x \in \binom{V(K_n)}{\ell-1}}     \sum_{v \colon x(v)=0\atop v' \colon x(v')=0} \psi(x_v)   \psi'(x_{v'})
\\&=
\sum_{x \in \binom{V(K_n)}{\ell}} \Biggl( \ell\, \psi(x)\psi'(x) + \sum_{x' \in \binom{V(K_n)}{\ell} \colon x' \sim  x}\!\!\!\!\!\!\!\!  \psi(x) \psi'(x') \Biggr)
\\&=
 \sum_{x \in \binom{V(K_n)}{\ell}} \psi(x) \Biggl(  \ell\, \psi'(x) +  \sum_{x' \in \binom{V(K_n)}{\ell} \colon x' \sim x} \!\!\!\!\!\!\!\! \psi'(x') \Biggr).
\end{split}
\label{equation: between levels}
\end{equation}
Now recall that for any eigenvector \( \psi' \) of \( -Q_n^{(\ell)} \) with corresponding eigenvalue \( \lambda \), and any \( x \in \binom{V(K_n)}{\ell} \),
\[
\lambda \psi'(x) = -Q_n^{(\ell)} \psi' = \alpha \ell (n-l) \psi'(x) \,-\,  \alpha \!\!\!\! \sum_{x'\in \binom{V(K_n)}{\ell} \colon x'\sim x}  \!\!\!\! \psi'(x').
\]
Using this, we obtain
\begin{equation}
\begin{split}
\langle  \psi_+ , \psi_+'   \rangle 
&=
\frac{1}{\binom{n}{\ell-1}} \cdot  \sum_{x \in \binom{V(K_n)}{\ell}} \psi(x) \Biggl(   \ell\, \psi'(x) +  \ell (n-\ell) \psi'(x)  -\alpha^{-1} \lambda \psi'(x) \Biggr)
\\&=
\frac{1}{\binom{n}{\ell-1}}  \cdot  \frac{ \alpha \ell(n-\ell+1)  - \lambda}{\alpha} \cdot  \sum_{x \in \binom{V(K_n)}{\ell}} \psi(x) \psi'(x)
\\&=
\frac{n-\ell+1}{\ell}  \cdot  \frac{ \alpha \ell(n-\ell+1)  - \lambda}{\alpha} \cdot   \langle \psi, \psi' \rangle.
\end{split}
\label{equation: plus equation}
\end{equation}
If we set \( \psi = \psi' \), the first claim of the lemma immediately follows.

To repeat the argument with \( \psi_- \) and \( \psi_-' \) instead of \(\psi_+ \) and \( \psi_+' \), the only thing we need to replace is~\eqref{equation: between levels}. By a similar argument as in the first case, we obtain
\begin{equation*}
\begin{split}
\hspace{3em}&\hspace{-3em}\sum_{x \in \binom{V(K_n)}{\ell+1}}     \sum_{v \colon x(v)=1\atop v' \colon x(v')=1} \psi(x_v)   \psi'(x_{v'})
\\&=
\sum_{x \in \binom{V(K_n)}{\ell}} \Biggl( (n-\ell)\, \psi(x)\psi'(x) + \sum_{x' \in \binom{V(K_n)}{\ell} \colon x' \sim  x}\!\!\!\!\!\!\!\!  \psi(x) \psi'(x') \Biggr)
\end{split}
\end{equation*}
Using this equation, we get the following equation.
\begin{equation}
\langle  \psi_- , \psi_-'   \rangle 
=
\frac{\ell+1}{n-\ell}  \cdot  \frac{ \alpha(\ell+1)(n-\ell)  - \lambda}{\alpha} \cdot   \langle \psi, \psi' \rangle.
\label{equation: minus equation}
\end{equation}
If we set \( \psi = \psi' \), the second claim of the lemma now immediately follows.

\end{proof}

The next lemma shows that orthogonality is preserved by the operations \( \psi \mapsto \psi_+ \) and \( \psi \mapsto \psi_-\).

\begin{lemma}
Let \( \alpha \) be a positive real number and let \( Q_n^{(\ell)} \) be the generator of the symmetric exclusion process with respect to \( (K_n, \alpha) \). Then for any two orthogonal eigenvectors \( \psi \) and \( \psi' \) of \( -Q_n^{(\ell)} \), 
\[
 \langle \psi_+, \psi'_+ \rangle =  
 \langle \psi_-, \psi'_- \rangle = 0.
 \]
\label{lemma: orthogonality}
\end{lemma}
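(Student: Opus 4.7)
The plan is to observe that the proof of Lemma~\ref{lemma: length} already does all the work; one only needs to notice that the identity it derives is bilinear in $(\psi, \psi')$ and does not require $\psi = \psi'$. Concretely, the chain of manipulations starting from
\[
\langle \psi_+, \psi_+' \rangle = \frac{1}{\binom{n}{\ell-1}} \sum_{x \in \binom{V(K_n)}{\ell-1}} \sum_{\substack{v\colon x(v)=0 \\ v'\colon x(v')=0}} \psi(x_v)\psi'(x_{v'})
\]
reindexes the double sum over $\binom{V(K_n)}{\ell}$ and then invokes the eigenvector relation $-Q_n^{(\ell)}\psi'(x) = \lambda'\psi'(x)$, where $\lambda'$ is the eigenvalue corresponding to $\psi'$. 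At no point is it used that $\psi = \psi'$: the function $\psi$ appears only as a generic real-valued function on $\binom{V(K_n)}{\ell}$.

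Therefore, for any pair of eigenvectors $\psi, \psi'$ of $-Q_n^{(\ell)}$ (with $\psi'$ corresponding to eigenvalue $\lambda'$), equation~\eqref{equation: plus equation} gives
\[
\langle \psi_+, \psi_+' \rangle = \frac{n-\ell+1}{\ell} \cdot \frac{\alpha\ell(n-\ell+1) - \lambda'}{\alpha} \cdot \langle \psi, \psi' \rangle,
\]
and equation~\eqref{equation: minus equation} analogously gives
\[
\langle \psi_-, \psi_-' \rangle = \frac{\ell+1}{n-\ell} \cdot \frac{\alpha(\ell+1)(n-\ell) - \lambda'}{\alpha} \cdot \langle \psi, \psi' \rangle.
\]
Now I simply apply the hypothesis of orthogonality: if $\langle \psi, \psi' \rangle = 0$, then both right-hand sides vanish, yielding $\langle \psi_+, \psi_+' \rangle = \langle \psi_-, \psi_-' \rangle = 0$, which is exactly the conclusion of the lemma.

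Consequently the proof is essentially a remark pointing to the previous lemma's derivation; there is no genuine obstacle. The only care needed is to make it explicit that the equations~\eqref{equation: plus equation} and~\eqref{equation: minus equation} were derived in full bilinear generality (treating $\psi$ and $\psi'$ as two distinct inputs) before one specializes to $\psi = \psi'$ to obtain Lemma~\ref{lemma: length}.
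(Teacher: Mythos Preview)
Your proposal is correct and follows essentially the same approach as the paper: both invoke the bilinear identities~\eqref{equation: plus equation} and~\eqref{equation: minus equation} derived in the proof of Lemma~\ref{lemma: length} and then substitute the orthogonality hypothesis $\langle \psi, \psi' \rangle = 0$. Your observation that those identities were derived in full generality (with $\psi$ and $\psi'$ distinct) before specializing is exactly the point the paper relies on.
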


\begin{proof}
As \( \psi \) and \( \psi' \) are orthogonal, we have that \( \langle \psi, \psi' \rangle = 0 \). Using~\eqref{equation: plus equation}, we obtain
\begin{equation*}
\langle  \psi_+ , \psi_+'   \rangle 
=
\frac{n-\ell+1}{\ell}  \cdot  \frac{ \alpha \ell(n-\ell+1)  - \lambda}{\alpha} \cdot   \langle \psi, \psi' \rangle = 0.
\end{equation*}
Analogously, using~\eqref{equation: minus equation}, we obtain
\begin{equation*}
\langle  \psi_- , \psi_-'   \rangle 
=
\frac{\ell+1}{n-\ell}  \cdot  \frac{ \alpha(\ell+1)(n-\ell)  - \lambda}{\alpha} \cdot   \langle \psi, \psi' \rangle = 0.
\end{equation*}
\end{proof}

We are now ready to give a proof of Proposition~\ref{proposition: eigenvectors}.

\begin{proof}[Proof of Proposition~\ref{proposition: eigenvectors}]
We will first prove that (a), (b) and (c) hold by using induction on the number of black marbles, \( \ell \). As an induction hypothesis, suppose that for some \( \ell \in \mathbb{N} \), there is an orthonormal basis of eigenvectors \( \psi_1^{(n, \ell)}\), \ldots, \(\psi_{\binom{n}{\ell}}^{(n, \ell)} \) to \( -Q_n^{(\ell)}\) with corresponding eigenvalues
\[
\lambda_i^{(n, \ell)} = \begin{cases}
0 &\textnormal{for } i = 1 \cr
\alpha j(n-j+1) &\textnormal{for } \binom{n}{j} < i \leq \binom{n}{j+1},\, j=0,1,\ldots ,\ell.
\end{cases}
\]

By Lemmas~\ref{lemma: eigenvectors} and~\ref{lemma: orthogonality}, the nonzero vectors among \( \psi_{1-}^{(n, \ell)}\), \ldots, \(\psi_{\binom{n}{\ell}-}^{(n, \ell)} \) is an orthogonal set of eigenvectors of \( -Q_n^{(\ell+1)}\) with corresponding eigenvalues \( \lambda_1^{(n, \ell)} \), \ldots, \(  \lambda_{\binom{n}{\ell}}^{(n, \ell)} \). By Lemma~\ref{lemma: length} and the induction hypothesis, for any \( i \in \{ 1,2, \ldots, \binom{n}{\ell} \} \),
\[
 \langle \psi_{i-}^{(n, \ell)}, \psi_{i-}^{(n,\ell)} \rangle = \frac{\ell+1}{\alpha(n-\ell)} \cdot  \left(  \alpha(\ell+1)(n-\ell)  - \lambda_{i}^{(n, \ell)}\right) \not = 0.
 \]
For \( i \in \{ 1, \ldots, \binom{n}{\ell}\} \), set 
\[
 \psi_i^{(n, \ell+1)} \coloneqq \frac{\psi_{i-}^{(n, \ell)}}{\sqrt{ \langle \psi_{i-}^{(n, \ell)}, \psi_{i-}^{(n,\ell)} \rangle}}.
\]
Then for each \( i \in \{ 1,2,\ldots, \binom{n}{\ell} \}\), \( \psi_i^{(n, \ell+1)}  \) is an eigenvector of \( -Q_n^{(\ell+1) } \) with corresponding eigenvalue \( \lambda_i^{(n, \ell+1)} = \lambda_i^{(n, \ell)}\). Moreover, we can extend the set  \( \{ \psi_i^{(n, \ell+1)} \}_{i=1, \ldots, \binom{n}{\ell}} \) to an orthonormal basis \( \{ \psi_i^{(n, \ell+1)} \}_{i=1, \ldots, \binom{n}{\ell+1}} \) of eigenvectors of \( -Q_{n}^{(\ell+1)} \). To show that the induction hypothesis must hold for \( \ell +1 \) black marbles given that it holds for \( \ell \) black marbles, it now suffices to show that \( \lambda_i^{(n, \ell+1)} = \alpha(\ell+1)(n - \ell) \) for all \( i > \binom{n}{\ell} \). To this end, note that by Lemmas~\ref{lemma: eigenvectors} and~\ref{lemma: orthogonality}, the nonzero vectors in the set \( \{ \psi_{i+}^{(n, \ell+1)} \}_{i=1, \ldots, \binom{n}{\ell+1}} \) are an orthogonal set of eigenvectors of \( -Q_{\ell}^{(n)} \). By Lemma~\ref{lemma: length}, 
\[
 \langle \psi_{i+}^{(n, \ell+1)}, \psi_{i+}^{(n, \ell+1)} \rangle =  \frac{n-\ell}{\alpha (\ell+1)} \cdot  \left( \alpha(\ell+1)(n-\ell)  - \lambda_{i}^{(n, \ell+1)}\right).
 \]
By the induction hypothesis, this is nonzero for \( i \in \{ 1,  2, \ldots, \binom{n}{\ell} \}\). As no orthogonal set of eigenvectors of \( -Q_n^{(\ell)} \) can contain more than \( \binom{n}{\ell} \) elements, we must have that \( \lambda_i^{(n, \ell+1)} = \alpha(\ell+1)(n-\ell) \) for all \( i > \binom{n}{\ell} \). As the induction hypothesis is well known to hold for \( \ell = 0 \), the desired conclusion follows.

(d) follows directly from (a), (b) and (c) by applying the operator \( \psi \mapsto \psi_+ \) \( \ell - m \) times.
\end{proof}

\section{A proof  of  Theorem~\ref{theorem:main result}}
Before we give a proof of our first main result, Theorem~\ref{theorem:main result}, we will prove the following lemma, which is interesting in itself, relating the eigenvectors of an exclusion process on any graph with the eigenvectors of an exclusion process on the complete graph.

\begin{lemma}
Let \( Q_n^{(\ell)} \) be the generator of the symmetric exclusion process on \( K_n \) with \( \ell \) black marbles and rate \( \alpha \), and let \(R_n^{(\ell)} \) be the generator of the symmetric exclusion process of a graph \( G_n \), \( V(G_n) = V(K_n) \), with \( \ell \) black marbles with rate \( \beta \). 
Let \( \{ \psi_i^{(n, \ell)}\}_i \) be the eigenvectors of \( -Q_n^{(\ell)} \), and let \( \{ \lambda_i^{(n,\ell)} \}_i \)  be the corresponding eigenvalues.  Analogously, let   \( \{ \chi_i^{(n, \ell)} \}_i \) be the eigenvectors of \( -R_n^{(\ell)} \), and let \( \{ \mu_i^{(n,\ell)} \}_i \)  be the corresponding eigenvalues. 
Further, let \( d = \max_{v \in V(G_n)} \deg v \). Then for any \( k  \) and   \( k' \) such that \( \alpha k'(n-k'+1) \geq k \),
\[
\Span_{i \colon \lambda_i^{(n,l)} \leq k} \psi_i^{(n,\ell)} \subseteq \Span_{i' \colon \mu_{i'}^{(n,l)} \leq 2\beta k' d} \chi_{i'}^{(n,\ell)} .
\]

Consequently, if \( Q_n \) is the generator of the symmetric exclusion process on \( K_n \) with rate \( \alpha \),  \(R_n \) is the generator of the symmetric exclusion process of \( G_n \) with rate \( \beta\) and \( \{ \psi^{(n)}_{i, \ell} \} \) and \( \{ \chi^{(n)}_{i, \ell} \} \) are the orthonormal bases of eigenvectors of \( -Q_n \) and \( -R_n \) respectively, as defined in Lemma~\ref{lemma: general eigenvectors}, then 
\[
\Span_{i \colon \lambda_{i, \ell}^{(n)} \leq k} \psi_{i, \ell}^{(n)} \subseteq \Span_{i' \colon \mu_{i', \ell}^{(n)} \leq 2\beta k' d} \chi_{i',\ell}^{(n)}
\]
whenever \( \alpha k'(n-k'+1) \geq k \).
\label{lemma: spectral containment}
\end{lemma}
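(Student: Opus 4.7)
The plan is to use Proposition~\ref{proposition: eigenvectors}(d) to express every eigenvector of $-Q_n^{(\ell)}$ with eigenvalue at most $k$ as the image of some eigenvector of $-Q_n^{(k')}$ under the graph-independent \emph{lifting operator}
\[
L \colon f \longmapsto \Bigl( x \longmapsto \sum_{y \leq x,\, \|y\|=k'} f(y) \Bigr),
\]
and then re-expand the preimage in the basis $\{\chi_j^{(n,k')}\}_j$ of eigenvectors of $-R_n^{(k')}$, using Corollary~\ref{corollary for lemma 4.1} to see that the lifts of these $R_n^{(k')}$-eigenvectors are $R_n^{(\ell)}$-eigenvectors with the same eigenvalues, and Lemma~\ref{lemma: bounded eigenvalues} to bound those eigenvalues.

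First I would dispose of the trivial case $k' \geq \ell$: by Lemma~\ref{lemma: bounded eigenvalues} every eigenvalue of $-R_n^{(\ell)}$ is at most $2\beta \ell d \leq 2\beta k' d$, so the right-hand span is the entire function space $\mathbb{R}^{\binom{V(G_n)}{\ell}}$ and the inclusion holds trivially. So I may assume $k' < \ell$ (and by Remark~\ref{remark: symmetry} I may assume $\ell \leq \lfloor n/2 \rfloor$). Since the hypothesis $\alpha k'(n-k'+1) \geq k$ gives $\{i : \lambda_i^{(n,\ell)} \leq k\} \subseteq \{i : \lambda_i^{(n,\ell)} \leq \alpha k'(n-k'+1)\}$, Proposition~\ref{proposition: eigenvectors}(d) with $m=k'$ lets me choose each eigenvector $\psi_i^{(n,\ell)}$ contributing to the left-hand span in the form $\psi_i^{(n,\ell)} = C_i' \, L(\psi_i^{(n,k')})$ for some eigenvector $\psi_i^{(n,k')}$ of $-Q_n^{(k')}$.

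Next, since $V(K_n) = V(G_n)$, the set $\{\chi_j^{(n,k')}\}_j$ is a basis of the same function space $\mathbb{R}^{\binom{V(G_n)}{k'}}$, so I can expand $\psi_i^{(n,k')} = \sum_j a_{ij}\, \chi_j^{(n,k')}$, and by linearity
\[
\psi_i^{(n,\ell)} = C_i' \sum_j a_{ij}\, L(\chi_j^{(n,k')}).
\]
Corollary~\ref{corollary for lemma 4.1} applied to $G_n$ says that each $L(\chi_j^{(n,k')})$ is either identically zero or an eigenvector of $-R_n^{(\ell)}$ with eigenvalue exactly $\mu_j^{(n,k')}$, and Lemma~\ref{lemma: bounded eigenvalues} applied to $-R_n^{(k')}$ bounds this eigenvalue by $2\beta k' d$. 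Therefore $\psi_i^{(n,\ell)} \in \Span_{i' \colon \mu_{i'}^{(n,\ell)} \leq 2\beta k' d} \chi_{i'}^{(n,\ell)}$, yielding the fixed-$\ell$ inclusion. The statement for the full generators $Q_n$ and $R_n$ is then immediate from Lemma~\ref{lemma: general eigenvectors}: the orthonormal eigenvectors $\psi_{i,\ell}^{(n)}$ and $\chi_{i,\ell}^{(n)}$ are obtained from the fixed-$\ell$ eigenvectors by the same rescaling and extension by zero outside $\binom{V(G_n)}{\ell}$, so the inclusion on each $\ell$-slice implies the inclusion of the direct sums.

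There is no substantive obstacle here; essentially all the work has been done in Proposition~\ref{proposition: eigenvectors}(d) and Corollary~\ref{corollary for lemma 4.1}. The only point worth emphasizing is the graph-independence of the operator $L$: the formula $\sum_{y \leq x,\, \|y\|=k'} f(y)$ depends only on the inclusion order on subsets of $V(G_n) = V(K_n)$, so $L$ transports level-$k'$ eigenvectors to level-$\ell$ eigenvectors on \emph{every} graph with this vertex set, and hence the eigenvalue bound $2\beta k' d$ coming from the level-$k'$ generator on $G_n$ carries over to level $\ell$.
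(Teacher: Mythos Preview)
Your proof is correct and follows essentially the same route as the paper's: use Proposition~\ref{proposition: eigenvectors}(d) to express the $K_n$-eigenvectors as lifts from a lower level, re-expand in the $G_n$-eigenbasis at that level, and invoke Corollary~\ref{corollary for lemma 4.1} together with Lemma~\ref{lemma: bounded eigenvalues} to land in the desired span. The only difference is cosmetic: the paper lifts each $\psi_i^{(n,\ell)}$ from its \emph{own} level $j$ (where $\lambda_i^{(n,\ell)} = \alpha j(n-j+1)$) and then uses $j \leq k'$ at the end, whereas you lift everything uniformly from level $k'$; your version tacitly needs $k'$ to be an integer level, while the paper's works for arbitrary real $k'$, but this is immaterial for the application.
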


When we use Lemma~\ref{lemma: spectral containment} in the proof of Theorem~\ref{theorem:main result}, we will think of \( n \) as being very large and \( k \) as being small and fixed, and pick \( \alpha = 1/n \) and \( \beta = 1/d \). With this choice of parameters, and any \( k\) and \( k' \) such that \(  k'(n-k'+1)/n \geq k \), Lemma~\ref{lemma: spectral containment} says that
\[
\Span_{i \colon \lambda_{i, \ell}^{(n)} \leq k} \psi_{i, \ell}^{(n)} \subseteq \Span_{i' \colon \mu_{i', \ell}^{(n)} \leq  2k'} \chi_{i',\ell}^{(n)}.
\]
From the simple inequality
\[
 \frac{x(n-x+1)}{n} \geq \frac{n+1}{2n} \cdot x,
\]
valid for \( x \in [0,n/2] \), we obtain that in this special case, we can choose any \( k' \geq \frac{2n}{n+1}  \cdot k \). In particular, we can choose \( k' = 2k \). From this we get the following lemma as a corollary.

\begin{lemma}
%
In the setting of Lemma~\ref{lemma: spectral containment}, if \( \alpha = 1/n \), \( \beta = 1/d \) and \( k \leq n/4 \), then
\[
\Span_{i \colon \lambda_{i, \ell}^{(n)} \leq k} \psi_{i, \ell}^{(n)} \subseteq \Span_{i \colon \mu_{i, \ell}^{(n)} \leq 4k} \chi_{i,\ell}^{(n)}.
\]
\label{lemma: spectral containment for the main result}
\end{lemma}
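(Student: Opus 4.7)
The plan is to apply Lemma~\ref{lemma: spectral containment} directly, with $\alpha = 1/n$, $\beta = 1/d$, and a carefully chosen auxiliary parameter $k'$. The only thing to verify is the hypothesis $\alpha k'(n-k'+1) \geq k$ of that lemma, after which the conclusion $\Span_{i \colon \lambda_{i,\ell}^{(n)} \leq k} \psi_{i,\ell}^{(n)} \subseteq \Span_{i' \colon \mu_{i',\ell}^{(n)} \leq 2\beta k' d} \chi_{i',\ell}^{(n)}$ becomes, for these parameter choices, $\Span_{i \colon \lambda_{i,\ell}^{(n)} \leq k} \psi_{i,\ell}^{(n)} \subseteq \Span_{i' \colon \mu_{i',\ell}^{(n)} \leq 2k'} \chi_{i',\ell}^{(n)}$, since $2\beta k' d = 2k'$. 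We therefore want the smallest possible $k'$ for which the hypothesis still holds.

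First I would set $k' = 2k$. The assumption $k \leq n/4$ guarantees $k' \leq n/2$, which is precisely the range where the elementary estimate mentioned in the excerpt, namely
\[
\frac{x(n-x+1)}{n} \geq \frac{n+1}{2n} \cdot x \quad \text{for } x \in [0, n/2],
\]
is valid and easy to verify (cross-multiply and simplify to $x \leq n/2$, which is immediate). Applying this with $x = k' = 2k$ gives
\[
\alpha k'(n-k'+1) = \frac{2k(n-2k+1)}{n} \geq \frac{n+1}{2n} \cdot 2k \geq k,
\]
so the hypothesis of Lemma~\ref{lemma: spectral containment} is satisfied.

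With this choice, the conclusion of Lemma~\ref{lemma: spectral containment} reads
\[
\Span_{i \colon \lambda_{i,\ell}^{(n)} \leq k} \psi_{i,\ell}^{(n)} \subseteq \Span_{i' \colon \mu_{i',\ell}^{(n)} \leq 2\beta k' d} \chi_{i',\ell}^{(n)} = \Span_{i' \colon \mu_{i',\ell}^{(n)} \leq 4k} \chi_{i',\ell}^{(n)},
\]
which is exactly the statement to be proved. There is essentially no obstacle here beyond bookkeeping: the real content is contained in Lemma~\ref{lemma: spectral containment}, and this corollary only specializes the parameters and records the convenient constant $4$ in place of $2\beta k' d$ for the special case $\alpha = 1/n$, $\beta = 1/d$.
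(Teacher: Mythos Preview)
Your proposal is correct and matches the paper's argument essentially line for line: the paper also specializes Lemma~\ref{lemma: spectral containment} with $\alpha=1/n$, $\beta=1/d$, uses the same inequality $\frac{x(n-x+1)}{n}\geq\frac{n+1}{2n}x$ on $[0,n/2]$, and takes $k'=2k$ (which lies in that range precisely because $k\leq n/4$) to obtain the bound $2\beta k' d = 4k$.
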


\begin{proof}[Proof of Lemma~\ref{lemma: spectral containment}]
Note first that by Remark~\ref{remark: symmetry}, it suffices to prove the lemma in the case \( \ell \leq \lfloor n/2 \rfloor \)
Fix \( k  \) and let  \( \psi_i^{(n, \ell)} \) be an eigenvalue of \( -Q_n^{(\ell)}\) with corresponding eigenvalue \( \lambda_i^{(n, \ell)} \leq k\). 
By Proposition~\ref{proposition: eigenvectors} (a), \( \lambda_i^{(n, \ell)} = \alpha j(n-j+1) \) for some \( j\in \{ 1,2, \ldots, \ell \} \). 
By Proposition~\ref{proposition: eigenvectors} (d), this in turn implies that   \( \psi_i^{(n, \ell)} \) can be written as
\[
\psi_i^{(n, \ell)} = \psi_i^{(n, \ell)}(\cdot) = C \sum_{y \leq \cdot \colon \| y \| = j} \psi_i^{(n,j)}(y)
\]
for some normalizing constant \( C \).
As \( \psi_i^{(n,j)} \colon \binom{V(G_n)}{j} \to \mathbb{R}\),  \( V(G_n) = V(K_n) \)  and \( \{ \chi_i^{(n, j)} \}_{i} \) is an orthonormal basis for all functions \( f \colon \binom{V(K_n)}{j} \to \mathbb{R}\),
\[
\psi_i^{(n,j)} \in \Span_{i'}  \chi_{i'}^{(n,j)} .
\]
By Lemma~\ref{lemma: bounded eigenvalues}, this is equivalent to that
\[
\psi_i^{(n,j)} \in   \Span_{i' \colon \mu_{i'}^{(n,j)}\leq 2\beta j d}  \chi_{i'}^{(n,j)}
\]
implying that
\begin{equation*}
\begin{split}
\psi_i^{(n,\ell)} = C \sum_{y \leq \cdot \colon \| y \| = j} \psi_i^{(n,j)}(y) &\in \Span_{{i'} \colon \mu_{i'}^{(n,j)}\leq 2\beta j  d}  \sum_{y \leq \cdot \colon \| y \| = j}  \chi_{i'}^{(n,j)} (y) 
\\&\subseteq \Span_{i' \colon \mu_{i'}^{(n,\ell)}\leq 2\beta j d}   \chi_{i'}^{(n,\ell)}.
\end{split}
\end{equation*}
where the last inclusion follows from  Corollary~\ref{corollary for lemma 4.1}.
Now  as \( \alpha k'(n-k'+1)  \geq k \geq \alpha j(n-j+1) \) and \(j \leq \ell \leq \lfloor n/2 \rfloor \), we have that \( j \leq k' \). Using this, we obtain
\begin{equation*}
\begin{split}
\psi_i^{(n,\ell)} \in   \Span_{i' \colon \mu_{i'}^{(n,\ell)}\leq 2\beta j d}   \chi_{i'}^{(n,\ell)} \subseteq \Span_{i' \colon \mu_{i'}^{(n,\ell)}\leq 2\beta k' d}   \chi_{i'}^{(n,\ell)}
\end{split}
\end{equation*}
which is the desired conclusion.

\end{proof}

We are now ready to give a proof of Theorem~\ref{theorem:main result}. The main idea of this proof is to use Lemma~\ref{lemma: spectral containment for the main result} to compare the sums in the characterizations of exclusion sensitivity and exclusions stability given by Propositions~\ref{proposition: spectral XS}~and~\ref{proposition: spectral XStability} for the two sequences of graphs.

\begin{proof}[Proof of Theorem~\ref{theorem:main result}]
Note first that it is enough to prove the result for \( \beta_n = 1/\max_{v \in V(G_n)} \deg v \).

Suppose that \( (f_n)_{n \geq 1 } \) is not exclusion sensitive with respect to the sequence \( (K_{|V(G_n)|}, 1/|V(G_n)| )_{n \geq 1}\). By Proposition~\ref{proposition: spectral XS}, either 
\[
\lim_{n \to \infty}\Var (\E[f_n(x) \mid \| x \| = \| X_0^{(n)} \| ]) \not = 0 
\]
or there is \( k > 0 \), \( \varepsilon>0 \) and a subsequence \( n' \) such that 
\begin{equation}\label{equation: length of projection}
\sum_{i, \ell \colon 0 < \lambda_{i, \ell}^{(n')} \leq k } \hat f_{n'}(i, \ell)^2  > \varepsilon
\end{equation}
for all \( n' \).
In the first case, we are already done, so we can assume that~\eqref{equation: length of projection} holds.
Now~\eqref{equation: length of projection} says exactly that the length of the projection of \( f_{n'} \) onto \( \Span_{i, \ell \colon 0< \lambda_{i, \ell}^{(n')} \leq k} \psi_{i,\ell}^{(n')}\) is at least \( \varepsilon \). By Lemma~\ref{lemma: spectral containment for the main result}, 
\[
\Span_{i \colon \lambda_{i, \ell}^{(n)} \leq k} \psi_{i, \ell}^{(n)} \subseteq \Span_{i \colon \mu_{i, \ell}^{(n)} \leq 4k} \chi_{i,\ell}^{(n)}.
\]
As \( \pi^{(n')} \) is the uniform measure for both \( G_n \) and \( K_{|V(G_n)|} \), these two spaces have the same inner product. This implies that the length of the projection onto the larger of the two spaces must be larger than the length of the projection onto the smaller subspace. In other words, if we define \( \check f_{n'}(i,\ell) \coloneqq \langle f_{n'}, \chi_{i,\ell}^{(n')} \rangle \) then   we must have
\begin{equation*}
\sum_{i, \ell \colon 0 < \mu_{i, \ell}^{(n')} \leq 4k } \check f_{n'}(i, \ell)^2
\;\;\;  \geq \!\!\!\!  \sum_{i, \ell \colon 0 < \lambda_{i, \ell}^{(n')} \leq k } \hat f_{n'}(i, \ell)^2  \;\; > \;\; \varepsilon
\end{equation*}
for all \( n' \). From this it follows that \( (f_n)_{n \geq 1} \) cannot be exclusion sensitive with respect to \( (G_n, 1/d_n)_{n \geq 1 } \), and finishes the proof of (i). 

To show that (ii) holds, suppose that \( (f_n)_{n \geq 1} \) is exclusion stable with respect to \( (K_{|V(G_n)|}, 1/|V(G_n)| )_{ n\geq 1} \). Then, by Proposition~\ref{proposition: spectral XStability}, for all \( \delta > 0 \) there is \( k >0 \) such that
\begin{equation*}
\sup_{n} \sum_{i, \ell \colon \lambda_{i,\ell}^{(n)} \geq k} \hat f_{n}(i,\ell)^2 < \delta
\end{equation*}
or equivalently, such that
\begin{equation*}
\inf_{n} \sum_{i, \ell \colon \lambda_{i,\ell}^{(n)} < k} \hat f_{n}(i,\ell)^2 > \langle f, f \rangle - \delta.
\end{equation*}
By Lemma~\ref{lemma: spectral containment for the main result}, this implies that
\begin{equation*}
\inf_{n} \sum_{i, \ell \colon \mu_{i,\ell}^{(n)} < 4k} \check f_{n}(i,\ell)^2 > \langle f, f \rangle- \delta.
\end{equation*}
As \( \delta \) was arbitrary,  by Proposition~\ref{proposition: spectral XStability}, \( (f_n )_{n \geq 1} \) is exclusion stable with respect to \( (G_n, 1/d_n)_{n\geq 1} \). This finishes the proof.
\end{proof}

\section{Monotonicity at equal rate}

The main purpose of this section is to give a proof of Theorem~\ref{proposition: monotonicity}, which gave conditions given which the properties of being exclusion sensitive and exclusion stable was monotone with respect to adding edges to a sequence of graphs. We now formulate   the main lemma we will use in the proof of this result.

\begin{lemma} 
Let \( Q\) be the generator for the symmetric exclusion process with respect to \( (G, \alpha) \) and \( Q' \) be the generator for the symmetric exclusion process with respect to \( (G', \alpha)\), for two finite connected graphs \( G \) and \( G' \) with the same number of vertices and a strictly positive real number \( \alpha \). Let \( \{ \psi_{i,\ell} \}_{i,\ell} \) be and orthonormal set of eigenvectors of \( -Q \) with corresponding eigenvalues \( \{ \lambda_{i,\ell} \}_{i,\ell}\), and let \( \{ \chi_{i,\ell} \}_{i,\ell} \) and \( \{ \mu_{i,\ell}\}_{i,\ell} \) be the corresponding sets for \( -Q' \). Further, let \( f \colon \{ 0,1 \}^{V(G)} \to \mathbb{R} \). Then, if \( E(G') \subseteq E(G) \), for all strictly positive real numbers \( k \) and \( k' \)  we have that
\begin{equation*}
\sum_{i,\ell \colon \mu_{i,\ell} > k'} \langle f, \chi_{i,\ell}  \rangle^2
\leq 
\left(\sqrt{  \frac{k}{k'}  \sum_{i,\ell \colon 0<\lambda_{i,\ell}\leq k} \langle f, \psi_{i,\ell}\rangle^2 } + \sqrt{\sum_{i,\ell \colon \lambda_{i,\ell} > k} \langle f, \psi_{i,\ell} \rangle^2}\right)^2\!\!\!.
\end{equation*}

\label{lemma: monotonicity inequality}
\end{lemma}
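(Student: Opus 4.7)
The plan is to decompose $f$ into three orthogonal pieces according to the spectrum of $-Q$ — a kernel part, a \emph{low-eigenvalue} part ($0 < \lambda_{i,\ell} \leq k$), and a \emph{high-eigenvalue} part ($\lambda_{i,\ell} > k$) — and then separately bound the projection of each piece onto the high-$\mu$ subspace of $-Q'$, combining the three bounds via the triangle inequality in $L^2$. The whole argument is a Parseval/Markov-type computation driven by the Dirichlet-form monotonicity~\eqref{equation: Q inequality}.

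Concretely, I would write $f = f_0 + f_L + f_H$, where
\[
f_0 = \!\!\!\sum_{\lambda_{i,\ell}=0}\!\! \langle f,\psi_{i,\ell}\rangle\psi_{i,\ell}, \quad
f_L = \!\!\!\!\sum_{0<\lambda_{i,\ell}\leq k}\!\!\!\! \langle f,\psi_{i,\ell}\rangle\psi_{i,\ell}, \quad
f_H = \!\!\!\sum_{\lambda_{i,\ell}>k}\!\! \langle f,\psi_{i,\ell}\rangle\psi_{i,\ell}.
\]
The crucial observation about $f_0$ is that by Remark~\ref{remark: equal eigenvectors} the kernels of $-Q$ and $-Q'$ coincide (both equal the space of functions that are constant on each level $\binom{V(G)}{\ell}$, since $G$ and $G'$ are connected). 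Consequently $f_0$ is orthogonal to every $\chi_{i,\ell}$ with $\mu_{i,\ell} > 0$, and in particular to every $\chi_{i,\ell}$ with $\mu_{i,\ell} > k'$.

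For the low part $f_L$, the Dirichlet-form monotonicity~\eqref{equation: Q inequality}, which is available precisely because the two generators use the same rate $\alpha$ and $E(G') \subseteq E(G)$, combined with the elementary inequalities
\[
\langle -Q f_L, f_L\rangle = \!\!\!\!\sum_{0<\lambda_{i,\ell}\leq k}\!\!\!\! \lambda_{i,\ell}\langle f,\psi_{i,\ell}\rangle^2 \leq k\,\|f_L\|^2
\]
and
\[
\langle -Q' f_L, f_L\rangle = \sum_{i,\ell}\mu_{i,\ell}\langle f_L,\chi_{i,\ell}\rangle^2 \geq k'\!\!\!\sum_{\mu_{i,\ell}>k'}\!\!\langle f_L,\chi_{i,\ell}\rangle^2,
\]
yields $\sum_{\mu_{i,\ell}>k'}\langle f_L,\chi_{i,\ell}\rangle^2 \leq (k/k')\sum_{0<\lambda_{i,\ell}\leq k}\langle f,\psi_{i,\ell}\rangle^2$. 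For the high part, Parseval in the $\chi$-basis gives the trivial bound $\sum_{\mu_{i,\ell}>k'}\langle f_H,\chi_{i,\ell}\rangle^2 \leq \|f_H\|^2 = \sum_{\lambda_{i,\ell}>k}\langle f,\psi_{i,\ell}\rangle^2$.

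Finally, letting $P$ be the orthogonal projection onto $\Span\{\chi_{i,\ell}: \mu_{i,\ell} > k'\}$, we have $Pf = Pf_0 + Pf_L + Pf_H = Pf_L + Pf_H$ since $Pf_0 = 0$, so the triangle inequality yields $\|Pf\| \leq \|Pf_L\| + \|Pf_H\|$; squaring and substituting the two bounds above gives exactly the stated inequality. The only step requiring any care is the kernel-matching in the first paragraph — if the zero-eigenspaces of $-Q$ and $-Q'$ did not coincide, the $f_0$ term could contribute an uncontrolled error — and this is precisely what is provided by Remark~\ref{remark: equal eigenvectors}; the remaining steps are Parseval, the eigenvalue truncation, and the Rayleigh-quotient comparison.
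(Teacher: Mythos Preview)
Your proof is correct and follows essentially the same approach as the paper's: decompose $f$ into kernel, low-eigenvalue, and high-eigenvalue parts with respect to $-Q$; use Remark~\ref{remark: equal eigenvectors} to kill the kernel contribution; bound the low part via the Dirichlet-form comparison~\eqref{equation: Q inequality} sandwiched between the eigenvalue truncations; bound the high part by Parseval; and combine with the triangle inequality. The only cosmetic difference is that you phrase the final step via an explicit orthogonal projection $P$, whereas the paper works directly with the sums of squared coefficients.
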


The main idea of our proof of this lemma is to  use~\eqref{equation: Q inequality} to relate the eigenvectors and eigenvalues of \( -Q \) and \( -Q' \). This gives good bounds for functions with support only on eigenvectors corresponding to small eigenvalues, why the function \( f \) is first split into two parts; one of which is the projection of \( f \) onto the span of such  eigenvectors. The squares and square roots arises naturally by an application of the triangle inequality.

\begin{proof}

Fix \( k> 0 \) and \( k' > 0 \) and define \( P_{\lambda \leq k} f \coloneqq  \sum_{i, \ell \colon 0<\lambda_{i, \ell}  \leq k} \langle f, \psi_{i,\ell} \rangle \psi_{i,\ell} \) to be the projection of \( f \) onto the space spanned by all eigenvectors \( \psi_{i, \ell}\) with corresponding eigenvalue less than or equal to \(  k \) but not equal to zero. Similarly, define \( P_{\lambda > k} f \coloneqq  \sum_{i, \ell \colon \lambda_{i, \ell}  >k} \langle f, \psi_{i,\ell} \rangle \psi_{i,\ell} \).
Then for any \( k'>0 \),
\begin{equation*}
\begin{split}
 {\sum_{i , \ell \colon \mu_{i,\ell} > k'} \langle f, \chi_{i,\ell} \rangle^2}
&= {\sum_{i , \ell \colon  \mu_{i,\ell} > k'} \Bigl\langle  P_{\lambda \leq k} f + P_{\lambda > k} f + \sum_{j, m \colon  \lambda_{j,m} = 0} \langle f, \psi_{j,m} \rangle \psi_{j,m} , \, \chi_{i,\ell}  \Bigr\rangle^2}
\\&= {\sum_{i , \ell \colon \mu_{i,\ell} > k'} \Bigl\langle  P_{\lambda \leq k} f + P_{\lambda > k} f +  \sum_{j,m \colon \mu_{j,m} = 0} \langle f, \chi_{j,m} \rangle \chi_{j,m}, \, \chi_{i,\ell}  \Bigr\rangle^2}
\end{split}
\end{equation*}
where the last equality follows from Remark~\ref{remark: equal eigenvectors}. 
Using that any eigenvector with corresponding eigenvalue equal to zero is orthogonal to any eigenvector \( \chi_{i, \ell} \) with corresponding eigenvalue \( \mu_{i, \ell }\geq k'>0 \), and then applying the triangle inequality, we obtain
\begin{align}
 {\sum_{i , \ell \colon \mu_{i,\ell} > k'} \langle f, \chi_{i,\ell} \rangle^2}
&= 
{\sum_{i , \ell \colon \mu_{i,\ell} > k'} \langle P_{\lambda \leq k} f + P_{\lambda > k} f, \chi_{i,\ell}  \rangle^2}\nonumber
\\[-1ex]&\leq
 \left( \sqrt{ {\sum_{i , \ell\colon \mu_{i,\ell} > k'} \langle P_{\lambda \leq k} f , \chi_{i,\ell} \rangle^2 }}
+
\sqrt{{\sum_{i , \ell\colon \mu_{i,\ell} > k'}  \langle  P_{\lambda > k} f, \chi_{i,\ell} \rangle^2}} \right)^{2}\!\!\!.\label{equation line: two terms}
\end{align}
Now note that
\begin{align*} 
\sum_{i, \ell \colon \mu_{i,\ell}> k'} \langle  P_{\lambda \leq k} f  , \chi_{i,\ell}\rangle^2 
&\leq
\sum_{i, \ell \colon \mu_{i,\ell}> k'}\frac{\mu_{i,\ell}}{k'}  \cdot \langle    P_{\lambda \leq k} f  , \chi_{i,\ell}\rangle^2 
&\leq
\frac{1}{k'}  \sum_{i, \ell \colon \mu_{i,\ell}> 0} \mu_{i,\ell} \cdot \langle    P_{\lambda \leq k} f  , \chi_{i,\ell}\rangle^2 
\\
&=
\frac{1}{k'} \cdot \langle -Q' P_{\lambda \leq k} f, P_{\lambda \leq k} f \rangle.
\end{align*}
Applying~\eqref{equation: Q inequality}, it follows that
\begin{align*} 
\sum_{i, \ell \colon \mu_{i,\ell}> k'} \langle  P_{\lambda \leq k} f  , \chi_{i,\ell}\rangle^2 
&\leq
\frac{1}{k'} \cdot \langle -Q P_{\lambda \leq k} f, P_{\lambda \leq k} f \rangle
=
\frac{1}{k'}  \sum_{i,\ell \colon \lambda_{i,\ell} > 0} \lambda_{i,\ell} \cdot \langle P_{\lambda \leq k} f, \psi_{i,\ell}\rangle^2
\\&=
\frac{1}{k'}  \sum_{i,\ell \colon 0<\lambda_{i,\ell} \leq k} \lambda_{i,\ell} \cdot \langle   f, \psi_{i,\ell}\rangle^2
\leq
\frac{k}{k'}   \sum_{i,\ell \colon 0<\lambda_{i,\ell} \leq k}   \langle   f, \psi_{i,\ell}\rangle^2.
\end{align*}

For the second term in the last expression of~\eqref{equation line: two terms}, again using Remark~\ref{remark: equal eigenvectors}, we have
\begin{align*}
\sum_{i \colon \mu_{i,\ell} > k'} \langle   P_{\lambda > k} f, \chi_{i,\ell} \rangle^2
&\leq 
\sum_{i \colon \mu_{i,\ell} > 0} \langle   P_{\lambda > k} f, \chi_{i,\ell} \rangle^2
= 
\sum_{i \colon \lambda_{i,\ell} > 0} \langle   P_{\lambda > k} f, \psi_{i,\ell} \rangle^2
\\
&= 
\sum_{i \colon \lambda_{i,\ell} > k} \langle   f, \psi_{i,\ell} \rangle^2
\end{align*}
Summing up, we now get,
\begin{equation*}
\sum_{i,\ell \colon \mu_{i,\ell} > k'} \langle f, \chi_{i,\ell} \rangle^2
\leq 
\left(\sqrt{  \frac{k}{k'}  \sum_{i,\ell \colon 0< \lambda_{i,\ell}\leq k} \langle f, \psi_{i,\ell}\rangle^2 } + \sqrt{\sum_{i,\ell \colon \lambda_{i,\ell} > k} \langle f, \psi_{i,\ell} \rangle^2}\right)^2\!\!\!.
\end{equation*}
which is the desired conclusion.
\end{proof}

We now give a proof of Theorem~\ref{proposition: monotonicity}, whose conclusion will follow more or less directly by applying Lemma~\ref{lemma: monotonicity inequality} to the  sums in the characterizations of exclusions sensitivity and exclusion stability given by Propositions~\ref{proposition: spectral XS}~and~\ref{proposition: spectral XStability}.

\begin{proof}[Proof of Theorem~\ref{proposition: monotonicity}]

Let \( \hat f_n(i, \ell) = \langle f_n, \psi_{i,\ell}^{(n)} \rangle \) and \( \check f_n(i,\ell) = \langle f_n, \chi_{i,\ell}^{(n)} \rangle \).

For the proof of the first part of the theorem, suppose that \( (f_n)_{n \geq 1} \) is exclusion stable with respect to \( (G_n, \alpha_n)_{n \geq 1} \). Then by Proposition~\ref{proposition: spectral XStability}, for all \( \delta > 0 \) there is \( k>0 \) such that \( \sum_{i,\ell \colon \lambda_{i,\ell}^{(n)} > k} \hat f_n(i,\ell) < \delta/4 \) for all \( n\geq 1 \).
Since 
\[
 \sum_{i,\ell \colon 0< \lambda_{i,\ell}^{(n)} \leq k} \hat f_n(i,\ell)^2 \leq  \sum_{i,\ell } \hat f_n(i,\ell)^2 = \E [f_n(X_0^{(n)})^2] \leq 1
\]
 there is \( k'>0 \) such that
\[
\frac{k}{k'}  \sum_{i,\ell \colon 0< \lambda_{i,\ell}^{(n)} \leq k} \hat f_n(i,\ell)^2 < \delta/4
\] 
for all \( n \). Using Lemma~\ref{lemma: monotonicity inequality}, we thus obtain
\begin{align*}
\sum_{i, \ell \colon \mu_{i,\ell}^{(n)} > k'}  \check f_n(i,\ell)^2
&\leq 
\left(\sqrt{\frac{k}{k'}\cdot { \sum_{i, \ell \colon 0<\lambda_{i,\ell}^{(n)} \leq k}  \hat f_n(i,\ell)^2    } } + \sqrt{ \sum_{i, \ell \colon \lambda_{i,\ell}^{(n)} > k}  \hat f_n(i,\ell)^2 }\right)^2
\\& \leq \left(\sqrt{\delta/4} + \sqrt{\delta/4}\right)^2
=
\delta.
\end{align*}
As \( \delta \) was arbitrary, by Proposition~\ref{proposition: spectral XStability}, \( (f_n)_{n \geq 1} \) is exclusion stable with respect to \( (G_n', \alpha_n)_{n \geq 1} \).

For the other direction, suppose that \( (f_n)_{n \geq 1} \) is exclusion sensitive with respect to \( (G_n', \alpha_n)_{n \geq 1} \). By Proposition~\ref{proposition: spectral XS},
\[
\lim_{n \to \infty}\Var (\E[f_n(x) \mid \| x \| = \| X_0^{(n)} \| ]) = 0 
\]
and for all \( k' > 0 \),
\begin{equation}
\lim_{n \to \infty} \sum_{i, \ell \colon 0< \mu_{i,\ell}^{(n)} \leq  k'}  \check f_n(i,\ell)^2 = 0.
\label{equation: limit for one of the graph sequences}
\end{equation}

By Lemma~\ref{lemma: monotonicity inequality},  for any \( k> 0 \) and \(k' > 0 \)  we have that
\begin{align*}
 \sum_{i, \ell \colon 0< \mu_{i,\ell}^{(n)} \leq  k'}  \check f_n(i,\ell)^2 
 &= 
 \langle f_n, f_n \rangle -  \sum_{i, \ell \colon  \mu_{i,\ell}^{(n)} = 0}  \check f_n(i,\ell)^2 -  \sum_{i, \ell \colon  \mu_{i,\ell}^{(n)} > k'}  \check f_n(i,\ell)^2 
 \\
 &= 
 \langle f_n, f_n \rangle -  \sum_{i, \ell \colon  \lambda_{i,\ell}^{(n)} = 0}  \hat f_n(i,\ell)^2 -  \sum_{i, \ell \colon  \mu_{i,\ell}^{(n)} > k'}  \check f_n(i,\ell)^2 
 \\&\geq
  \langle f_n, f_n \rangle -  \sum_{i, \ell \colon  \lambda_{i,\ell}^{(n)} = 0}  \hat f_n(i,\ell)^2 
- \left(\sqrt{\frac{k}{k'}\cdot {  \sum_{i,\ell \colon 0< \lambda_{i,\ell}^{(n)} \leq k} \hat f_n(i,\ell)^2 } } + \sqrt{  \sum_{i,\ell \colon \lambda_{i,\ell}^{(n)} > k} \hat f_n(i,\ell)^2 }\right)^2.
\end{align*}
Using~\eqref{equation: limit for one of the graph sequences}, it thus follows that  
\begin{align*}
0 \geq \limsup_{n \to \infty}  \left( \langle f_n, f_n \rangle -  \sum_{i, \ell \colon  \lambda_{i,\ell}^{(n)} = 0}  \hat f_n(i,\ell)^2 
- \left(\sqrt{\frac{k}{k'}\cdot {  \sum_{i,\ell \colon 0< \lambda_{i,\ell}^{(n)} \leq k} \hat f_n(i,\ell)^2 } } + \sqrt{  \sum_{i,\ell \colon \lambda_{i,\ell}^{(n)} > k} \hat f_n(i,\ell)^2 }\right)^2 \right).
\end{align*}
As this holds for any \( k'> 0 \) and
\[
 \sum_{i,\ell \colon 0< \lambda_{i,\ell}^{(n)} \leq k} \hat f_n(i,\ell)^2 \leq  \sum_{i,\ell} \hat f_n(i,\ell)^2 = \E [f_n^2] \leq 1
\]
we obtain
\begin{align*}
0 
&\geq 
\limsup_{n \to \infty}  \left(  \langle f_n, f_n \rangle -  \sum_{i, \ell \colon  \lambda_{i,\ell}^{(n)} = 0}  \hat f_n(i,\ell)^2 
-   \sum_{i,\ell \colon \lambda_{i,\ell}^{(n)} > k} \hat f_n(i,\ell)^2 \right)
\\&=
 \limsup_{n \to \infty}   \sum_{i, \ell \colon  0<\lambda_{i,\ell}^{(n)} \leq k' }  \hat f_n(i,\ell)^2,
\end{align*}
which in particular implies that for any \( k > 0 \),
\begin{align*}
 \limsup_{n \to \infty}   \sum_{i, \ell \colon  0<\lambda_{i,\ell}^{(n)} \leq k' }  \hat f_n(i,\ell)^2 =0 .
\end{align*} 
Proposition~\ref{proposition: spectral XS} now ensures that \( (f_n)_{n\geq 1} \) is exclusion sensitive with respect to \( (G_n, \alpha_n)_{n \geq 1} \).
\end{proof}

\begin{remark}\label{remark: different rates}
The proof of Theorem~\ref{proposition: monotonicity} is easy to extend to the setting where the rates \( (\alpha_n)_{n \geq 1} \) is allowed to be different for different edges in the graphs, as long as the same edge has the same rate in both graphs.
To see this, simply note that the actual rates \( \alpha_n \) was never used in the proof, which depends only on Proposition~\ref{proposition: spectral XS}, Proposition~\ref{proposition: spectral XStability} and Lemma~\ref{lemma: monotonicity inequality}, which in turn only uses the earlier Remark~\ref{remark: equal eigenvectors} and~\eqref{equation: Q inequality}. All of these results can easily be seen to be valid also in this setting.
\end{remark}

\begin{remark}\label{remark: disconnected} 
Using the previous remark, we can quite easily make Theorem~\ref{proposition: monotonicity} even more general. Suppose namely that we are in the setting of Theorem~\ref{proposition: monotonicity}, except that the graphs \( (G_n')_{n \geq 1 } \) are not necessarily connected, but that for each \( n \geq 1 \), \( G_n' \) is the union of \( c_n < |V(G_n)|\) connected components. For \( n\geq 1 \) define intermediate graphs \( G_n^{(1)}\),  \( G_n^{(2)}\) and \( G_n^{(3)} \) as follows
\begin{itemize}
\item Let \( G_n^{(1)} \) be a graph with \( V(G_n^{(1)}) = V(G_n) \) and \( E(G_n') \subset E(G_n^{(1)}) \subseteq E(G_n) \) and where removing any edge in \( E(G_n^{(1)}) \backslash E(G_n') \) would make \( G_n^{(1)} \) disconnected. Call such a set of edges a minimal connecting set of edges for \( G_n \), and note that the number of edges in such a set will always be \( c_n-1 \).
\item Let \( G_n^{(2)} \) be a graph with \( V(G_n^{(2)}) = V(G_n) \) and \(  E(G_n^{(1)}) \subseteq E(G_n^{(2 )})  \), where \( E(G_n^{(2 )})  \backslash E(G_n^{(1 )}) \) is another minimal connecting set of edges for \( G_n \), and let the edges  in this set all have rate \(((c_n-1) n)^{-1}\). Note that we do not necessarily have that \( E(G_n^{(2)}) \subseteq E(G_n) \).
\item  Let \( G_n^{(3)} \) be the graph with \( V(G_n^{(3)}) = V(G_n) \) and \( E(G_n^{(3)}) = E(G_n')  \cup (E(G_n^{(2)}) \backslash E(G_n^{(1)})  ) \). Note that \( G_n^{(3)} \) is connected.
\end{itemize}

Now let \( f_n \colon \{ 0,1\}^{V(G_n)} \to \{ 0,1 \} \) ,    \( E_n \subseteq E(G_n) \) and write \( \mathcal{E}_{n,t} \) for the event that no edge in \( E_n \) was used before time \( t \). Then 
\begin{align*}
P(f_n(X_0^{(n)} ) \not = f_n(X_\varepsilon^{(n)})\mid \mathcal{E}_{n,\varepsilon})  \cdot P(\mathcal{E}_{n, \varepsilon})
&\leq
P(f_n(X_0^{(n)} ) \not = f_n(X_\varepsilon^{(n)})) 
\\&\leq
P(f_n(X_0^{(n)} ) \not = f_n(X_\varepsilon^{(n)}) \mid \mathcal{E}_{n,\varepsilon}) 
+   P(\mathcal{E}_{n,\varepsilon}^c).
\end{align*}
For the covariance, we get
\begin{align*}
&\Cov(f_n(X_0^{(n)} ), f_n(X_1^{(n)})) 
\\& \qquad  =
\E[f_n(X_0^{(n)} )  f_n(X_1^{(n)}) ]  - \E[f_n(X_0^{(n)} ) ]^2
\\& \qquad \leq
\E[f_n(X_0^{(n)} )  f_n(X_1^{(n)})  \mid \mathcal{E}_{n,1} ]  + P(\mathcal{E}_{n,1}^c )- \E[f_n(X_0^{(n)} ) \mid \mathcal{E}_{n,1} ]^2 \cdot P(\mathcal{E}_{n,1} )^2
\\& \qquad  =
\Cov (f_n(X_0^{(n)} ) , f_n(X_1^{(n)})  \mid \mathcal{E}_{n,1} ) + P(\mathcal{E}_{n,1}^c ) +  \E[f_n(X_0^{(n)} ) \mid \mathcal{E}_{n,1} ]^2 \cdot (1-P(\mathcal{E}_{n,1} )^2)
\\& \qquad \leq
\Cov (f_n(X_0^{(n)} ) , f_n(X_1^{(n)})  \mid \mathcal{E}_{n,1} ) + 3P(\mathcal{E}_{n,1}^c ) 
\end{align*}
and similarly,
\begin{align*}
&\Cov(f_n(X_0^{(n)} ), f_n(X_1^{(n)})) 
\\& \qquad  =
\E[f_n(X_0^{(n)} )  f_n(X_1^{(n)}) ]  - \E[f_n(X_0^{(n)} ) ]^2
\\& \qquad \geq
\E[f_n(X_0^{(n)} )  f_n(X_1^{(n)}) \mid \mathcal{E}_{n,1}] \cdot P(\mathcal{E}_{n,1}) - \left( \E[f_n(X_0^{(n)} ) \mid \mathcal{E}_{n,1} ] \cdot {P(\mathcal{E}_{n,1})} + P(\mathcal{E}_{n,1}^c)\right) ^2
\\& \qquad =
\Cov( f_n(X_0^{(n)} ) , f_n(X_1^{(n)}) \mid \mathcal{E}_{n,1}) \cdot P(\mathcal{E}_{n,1}) + \E[f_n(X_0^{(n)} ) \mid \mathcal{E}_{n,1} ]^2 \cdot P(\mathcal{E}_{n,1} ) P(\mathcal{E}_{n,1}^c)
\\& \qquad \qquad 
- P(\mathcal{E}_{n,1}^c)^2 -2 P(\mathcal{E}_{n,1}^c) \cdot {P(\mathcal{E}_{n,1})} \cdot \E[f_n(X_0^{(n)} ) \mid \mathcal{E}_{n,1} ] 
\\& \qquad \geq
\Cov( f_n(X_0^{(n)} ) , f_n(X_1^{(n)}) \mid \mathcal{E}_{n,1}) \cdot P(\mathcal{E}_{n,1})
-3 P(\mathcal{E}_{n,1}^c) 
\\& \qquad \geq
\Cov( f_n(X_0^{(n)} ) , f_n(X_1^{(n)}) \mid \mathcal{E}_{n,1}) 
-4 P(\mathcal{E}_{n,1}^c) .
\end{align*}

As for \( E_n =  (E(G_n^{(2)}) \backslash E(G_n^{(1)})  ) \) we have
\[
P(\mathcal{E}_{n,1}) \leq P(\mathcal{E}_{n,\varepsilon}) = \left( e^{-\varepsilon ((c_n-1)n)^{-1}} \right)^{c_n-1} =  e^{-\varepsilon /n},
\]
and
\[
\limsup_{n \to \infty} e^{-\varepsilon /n} = 1
\]
these inequalities allow us to transfer the properties of being exclusion sensitive and exclusion stable between sequences of graphs that differ only on sets of edges that are being used very rarely in the limit.
Using this, we get the series of implications
\begin{align*}
\text{XS on } G_n' \Rightarrow \text{XS on } G_n^{(3)} \Rightarrow \text{XS on } G_n^{(2)} \Rightarrow \text{XS on } G_n^{(1)} \Rightarrow \text{XS on } G_n 
\end{align*}
where the second and fourth implication uses  Theorem~\ref{proposition: monotonicity} and Remark~\ref{remark: different rates}.

Analogously for exclusion stability, we have
\begin{align*}
\text{XStable on } G_n \Rightarrow \text{XStable on } G_n^{(1)} \Rightarrow \text{XStable on } G_n^{(2)} \Rightarrow \text{XStable on } G_n^{(3)} \Rightarrow \text{XStable on } G_n'
\end{align*}
where again, the second and fourth implication uses  Theorem~\ref{proposition: monotonicity} and Remark~\ref{remark: different rates}.

This shows that the assumption on that \( G_n' \) is to be connected can be dropped from Theorem~\ref{proposition: monotonicity}.
A similar argument shows that also the assumption that \( G_n \) is connected for every \( n \) can be dropped.
\end{remark}

\bibliographystyle{plain}
\bibliography{../../Litteratur/references}

\end{document}